\newtheorem{theorem}{Theorem}[section]
\newtheorem{corollary}[theorem]{Corollary}
\newtheorem{lemma}[theorem]{Lemma}
\newtheorem{proposition}[theorem]{Proposition}
\theoremstyle{definition}
\newtheorem{definition}[theorem]{Definition}
\newtheorem{remark}[theorem]{Remark}
\newtheorem{example}[theorem]{Example}
\theoremstyle{remark}
\renewcommand{\theclaim}{\textup{\theclaim}}
\numberwithin{equation}{section}
\def\openone
\newbox\ipbox
\newcommand{\ip}[2]{\left\langle #1\, , \,#2\right\rangle}
\newcommand{\diracb}[1]{\left\langle #1\mathrel{\mathchoice

{\setbox\ipbox=\hbox{$\displaystyle \left\langle\mathstrut
#1\right.$}

\vrule height\ht\ipbox width0.25pt depth\dp\ipbox}

{\setbox\ipbox=\hbox{$\textstyle \left\langle\mathstrut
#1\right.$}

\vrule height\ht\ipbox width0.25pt depth\dp\ipbox}

{\setbox\ipbox=\hbox{$\scriptstyle \left\langle\mathstrut
#1\right.$}

\vrule height\ht\ipbox width0.25pt depth\dp\ipbox}

{\setbox\ipbox=\hbox{$\scriptscriptstyle \left\langle\mathstrut
#1\right.$}

\vrule height\ht\ipbox width0.25pt depth\dp\ipbox}

}\right. }
\newcommand{\dirack}[1]{\left. \mathrel{\mathchoice

{\setbox\ipbox=\hbox{$\displaystyle \left.\mathstrut
#1\right\rangle$}

\vrule height\ht\ipbox width0.25pt depth\dp\ipbox}

{\setbox\ipbox=\hbox{$\textstyle \left.\mathstrut
#1\right\rangle$}

\vrule height\ht\ipbox width0.25pt depth\dp\ipbox}

{\setbox\ipbox=\hbox{$\scriptstyle \left.\mathstrut
#1\right\rangle$}

\vrule height\ht\ipbox width0.25pt depth\dp\ipbox}

{\setbox\ipbox=\hbox{$\scriptscriptstyle \left.\mathstrut
#1\right\rangle$}

\vrule height\ht\ipbox width0.25pt depth\dp\ipbox}

} #1\right\rangle}
\newcommand{\bz}{\mathbb{Z}}
\newcommand{\M}{\mathcal{M}}
\newcommand{\br}{\mathbb{R}}
\newcommand{\bc}{\mathbb{C}}
\newcommand{\bn}{\mathbb{N}}
\def\blfootnote{\xdef\@thefnmark{}\@footnotetext}
\renewcommand{\mod}{\operatorname{mod}}
\def\T{\mathbb{T}}
\def\-{^{-1}}
\def\T{\mathcal{T}}
\def\S{\mathcal{S}}
\def\Rs{{R^T}}
\def\Rsi{(R^T)^{-1}}
\begin{document}
\title[Iterative approximations of exponential bases on fractal measures]{Iterative approximations of exponential bases on fractal measures}
\author{Dorin Ervin Dutkay}
\blfootnote{}
\address{[Dorin Ervin Dutkay] University of Central Florida\\
    Department of Mathematics\\
    4000 Central Florida Blvd.\\
    P.O. Box 161364\\
    Orlando, FL 32816-1364\\
U.S.A.\\} \email{ddutkay@mail.ucf.edu}

\author{Deguang Han}
\address{[Deguang Han]University of Central Florida\\
    Department of Mathematics\\
    4000 Central Florida Blvd.\\
    P.O. Box 161364\\
    Orlando, FL 32816-1364\\
U.S.A.\\} \email{deguang.han@.ucf.edu}

\author{Eric Weber}
\address{[Eric Weber]Department of Mathematics\\
396 Carver Hall\\
Iowa State University\\
Ames, IA 50011\\
U.S.A.\\} \email{esweber@iastate.edu}

\thanks{This work is partially supported by the NSF grant DMS-1106934.}
\subjclass[2000]{28A80,28A78, 42B05} \keywords{fractal, iterated
function system, frame, Bessel sequence, Riesz basic sequence,
Beurling dimension}

\begin{abstract}
For some fractal measures it is a very difficult problem in
general to prove the existence of spectrum (respectively, frame,
Riesz and Bessel spectrum). In fact there are examples of
extremely sparse sets that are not even Bessel spectra. In this
paper we investigate this problem for general fractal measures
induced by iterated function systems (IFS). We prove some
existence results of spectra associated with Hadamard pairs. We
also obtain some characterizations of Bessel spectrum in terms of
finite matrices for affine IFS measures, and one sufficient
condition of frame spectrum in the case that the affine IFS has no
overlap.
\end{abstract}
\maketitle \tableofcontents
\section{Introduction}\label{intr}

Joseph Fourier introduced Fourier series on the interval $[0,1)$
(or $[-\pi, \pi)$), i.e. expansions of functions as a series with
terms $e^{2 \pi i n x}$.  These exponentials are now understood as
an orthonormal basis for the Hilbert space $L^{2}[0,1]$ with
respect to Lebesgue measure.  If one considers a Borel probability
measure $\mu$ on $[0,1]$ (or $\br^d$ more generally) other than
Lebesgue measure, a natural question is whether there exists an
orthonormal basis for $L^2(\mu)$ of the form $e^{2 \pi i
\lambda_{n} x}$, for some sequence of frequencies $\{ \lambda_{n}
\} \subset \br$.  If $\mu$ does have such a Fourier basis, it is
called a \emph{spectral measure}.  One of the first examples of a
singular measure which is spectral was given by Jorgensen and
Pedersen in \cite{JP98} which was a measure on a Cantor like set.
Also in \cite{JP98} is a proof that on the usual Cantor middle
third set, the natural measure does not have a
(orthogonal) Fourier basis.

Since this measure does not have an orthonormal basis of
exponentials, we then consider whether this measure has a
non-orthogonal basis of exponentials, such as a Riesz basis
\cite{PW34a}, Schauder basis \cite{Sing70} or even a frame of
exponentials  introduced by Duffin and Schaeffer \cite{DS52a} in
the context of nonharmonic Fourier series. For  the Hilbert space
$L^{2}[0,1]$ with respect to Lebesgue measure, the main result of
Duffin and Schaeffer is a sufficient density condition for
$\{e^{2\pi i\lambda x}\}_{\Lambda\in \Lambda}$ to be a frame.  
An almost complete characterization of the frame properties of $\{e^{2\pi
i\lambda x}\}_{\lambda\in \Lambda}$ in terms of lower Beurling
density was obtained by Jaffard, \cite{Jaffard} and Seip \cite{Seip2}, 
with a complete characterization given in \cite{OSANN} in terms of the
zero sequence of certain Hermite-Biehler functions.  
Moreover, in this classical case, the Bessel sequences
correspond the sets $\Lambda$ that have finite upper Beurling
density, and $\{e^{2\pi i\lambda x}\}_{\lambda\in \Lambda}$  is a
Riesz sequences if  $\Lambda$ is a sparse enough in the sense that
its upper Beurling density is strictly less than $1$. Therefore it
is very easy to construct, Bessel sequences/frames/Riesz sequences
of exponentials for the unit interval. However this is not the
case anymore for fractal measures. For some fractal measures it is
difficult even to prove the existence of frames or Riesz bases.
For example,  it is still an open problem whether there exists a
frame or Riesz basis for the fractal measure on the Cantor middle
third set.  In fact for this measure  there exists a set $\Lambda$
which is extremely sparse but is not a Bessel spectrum \cite{DHSW11}: For any integer
$a\in\bz\setminus\{0\}$, and any infinite set of non-negative
integers $F$, the set $\{3^na\,|\,n\in F\}$ cannot be a Bessel
spectrum for the middle-third Cantor measure $\mu_3$. Moreover,
any such set has upper Beurling dimension $0$.  For this reason it
seems that it is not even clear how to construct a infinite
sequence $\Lambda$ such that its corresponding exponentials form a
Bessel sequence or Riesz sequence. We investigate this problem in
this paper for general fractal measures induced by iterated
function systems (IFS).

\begin{definition}
A sequence $\{x_n\}_{n=1}^{\infty}$ in a Hilbert space (with inner
product $\langle \cdot , \cdot \rangle $) is \emph{Bessel} if
there exists a positive constant $B$ such that for all $v$
\[ \sum_{n=1}^{\infty} | \langle v , x_n \rangle |^2 \leq B \|v\|^2. \]
This is equivalent to the existence of a positive constant $D$
such that for every finite sequence $\{c_{1}, \dots , c_{K} \}$ of
complex numbers
\[ \| \sum_{n=1}^{K} c_{n} x_n \| \leq D \sqrt{\sum_{n=1}^{K} |c_{n}|^2}. \]
Here $D^2 = B$ is called the Bessel bound.

The sequence is a frame if in addition to being a Bessel sequence
there exists a positive constant $A$ such that for all $v$
\[ A \| v\|^2 \leq \sum_{n=1}^{\infty} | \langle v , x_n \rangle |^2 \leq B \|v\|^2. \]
In this case, $A$ and $B$ are called the lower and upper frame
bounds, respectively.

The sequence is a Riesz basic sequence if in addition to being a
Bessel sequence there exists a positive constant $C$ such that for
every finite sequence $\{c_{1}, \dots , c_{K} \}$ of complex
numbers
\[ C \sqrt{\sum_{n=1}^{K} |c_{n}|^2} \leq \| \sum_{n=1}^{K} c_{n} x_n \| \leq D \sqrt{\sum_{n=1}^{K} |c_{n}|^2}. \]
Here $C$ and $D$ are called the lower and upper basis bounds,
respectively.

A sequence $\{x_n\}$ in a Banach space is a Schauder basic
sequence if there exists a constant $E$ such that for every finite
sequence $\{c_{1}, \dots , c_{K} \}$ of complex numbers,
\[ \left\| \sum_{n=1}^{k} c_{n} x_n \right\| \leq E \left\| \sum_{n=1}^{K} c_{n} x_n \right\| \]
where $1 \leq k \leq K$.  $E$ is called the basis constant.
\end{definition}

In what follows we denote by
$$e_\lambda(x):=e^{2\pi i\lambda\cdot x},\quad(x\in \br^d)$$
for $\lambda\in\br^d$. We will also denote by $\widehat\mu$, the
Fourier transform of a measure $\mu$ on $\br^d$:
$$\widehat\mu(x)=\int e^{2\pi ix\cdot t}\,d\mu(t),\quad(x\in\br^d)$$

\begin{definition}\label{defspec} A Borel probability measure $\mu$ on $\br^d$ is called spectral 
if there exists a subset $\Lambda$ of $\br^d$ such that
$E(\Lambda):=\{e_\lambda : \lambda\in\Lambda\}$ is an orthonormal
basis for $L^2(\mu)$. In this case, $\Lambda$ is called a {\it
spectrum} for the measure $\mu$, and $(\mu,\Lambda)$ is called a
{\it spectral pair}.

A subset $\Lambda$ of $\br^d$ is said to be {\it orthogonal} in
$L^2(\mu)$ if the corresponding set of exponential functions
$E(\Lambda)$ is orthogonal in $L^2(\mu)$. The set $\Lambda$ is
called a {\it Bessel/frame/Riesz basic/Schauder basic spectrum} if
the set of exponentials $E(\Lambda)$ is a Bessel
sequence/frame/Riesz basic/Schauder basic sequence. We say that
the corresponding Bessel/frame/Riesz basis bounds are the bounds
for $\Lambda$.
\end{definition}

We will sometimes identify the set $\Lambda$ with the
corresponding set of exponential functions $E(\Lambda)$; for
example, we will say that $\Lambda$ spans a subspace if
$E(\Lambda)$ spans it.

%
%
%

\begin{definition}\label{defaifs}
Consider a $d\times d$ expansive integer matrix $R$ and a subset
$B\subset\bz^d$ with $\#B=N\geq2$. We define the iterated function
system by
$$\tau_b(x)=R^{-1}(x+b),\quad(x\in\br^d).$$

We also define the following operator $\mathcal T$ on Borel
probability measures on $\br^d$
\begin{equation}\label{eqtmu1}
(\T\mu)(E)=\frac1N\sum_{b\in B}\mu(\tau_b^{-1}(E)),
\end{equation}
for any Borel probability measure $\mu$ and all Borel sets $E$.
Equivalently the measure $\T\mu$ is defined by
\begin{equation}
\int f\,d\T\mu=\frac1N\sum_{b\in B}\int f\circ\tau_b\,d\mu,
\label{eqtmu2}
\end{equation}
for all continuous functions $f$ on $\br^d$.

We denote by $\mu_{B}$ the unique invariant measure for the
operator $\T$, i.e., $\T\mu_{B}=\mu_{B}$.  (See \cite{Hut81} for
the existence and uniqueness of this measure)

We say that the measure $\mu_B$ has no overlap if
$$\mu_B(\tau_b(X_B)\cap\tau_{b'}(X_B))=0\mbox{ for all $b\neq b'$ in $B$}.$$

For a subset $L$ of $\br^d$, define the operator $\S$ by
$$\S\Lambda=\bigcup_{l\in L}(R^T\Lambda+l).$$

\end{definition}

\section{Orthogonal exponentials}\label{spec}

We present here a technique for constructing an orthogonal sequence of exponentials 
for an invariant measure for an IFS via a sequence of orthogonal exponentials for 
a sequence of approximating measures.  What is required is that the IFS possesses a 
dual IFS in the sense of Hadamard pairs.  We begin with an orthogonal set of exponentials 
$\Lambda_{0}$ for an initial measure $\mu_0$, and the let the IFS act on $\mu_0$ and the dual IFS 
act on $\Lambda_{0}$, then consider the limit of this process in a specific manner.  
For dimension $1$, in the limit we obtain that the invariant measure for the IFS is a spectral 
measure and construct explicitly a spectrum for that measure (Theorem \ref{th4}).

\begin{definition}\label{defhada}
Let $L$ be a subset of $\br^d$ with $\#L=\#B=N$. We say that
$(B,L)$ is a Hadamard pair if the matrix
$$\frac{1}{\sqrt{N}}\left(e^{2\pi  iR^{-1}b\cdot l}\right)_{b\in B, l\in L}$$
is unitary.

\end{definition}

\begin{proposition}\label{cor2}
Let $(B,L)$ be a Hadamard pair. Define the set
$$\Pi(B)=\left\{ \gamma\in\br^d : \gamma\cdot b\in\bz\mbox{ for all }b\in B\right\}.$$

If $\mu$ is a spectral measure with spectrum $\Lambda$ contained
in $\Pi(B)$, then $\T\mu$ is a  spectral measure with spectrum
$\S\Lambda$.
\end{proposition}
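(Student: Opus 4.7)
The plan is to verify that $E(\S\Lambda)$ is an orthonormal basis for $L^2(\T\mu)$ by first checking orthonormality directly and then establishing completeness via the Jorgensen--Pedersen criterion ($E(\Gamma)$ is an orthonormal basis for $L^2(\nu)$ iff it is orthonormal and $\sum_{\gamma\in\Gamma}|\widehat\nu(\gamma-\xi)|^2\equiv 1$). The key preliminary is the Fourier-side self-similarity
\[
\widehat{\T\mu}(\xi)=M_B(R^{-T}\xi)\,\widehat\mu(R^{-T}\xi),\qquad M_B(\xi):=\frac{1}{N}\sum_{b\in B}e^{2\pi i\xi\cdot b},
\]
obtained directly from $\int f\,d\T\mu=\frac{1}{N}\sum_b\int f\circ\tau_b\,d\mu$.

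For orthonormality, the inner product $\langle e_{R^T\lambda+l},e_{R^T\lambda'+l'}\rangle_{L^2(\T\mu)}=\widehat{\T\mu}(R^T(\lambda-\lambda')+(l-l'))$ factors as
\[
M_B\bigl((\lambda-\lambda')+R^{-T}(l-l')\bigr)\,\widehat\mu\bigl((\lambda-\lambda')+R^{-T}(l-l')\bigr).
\]
Since $\Lambda\subset\Pi(B)$, adding $\lambda-\lambda'$ leaves $M_B$ invariant and collapses it to $M_B(R^{-T}(l-l'))=\frac{1}{N}\sum_b e^{2\pi i(l-l')\cdot R^{-1}b}$, which is the inner product of rows $l$ and $l'$ of the Hadamard unitary and equals $\delta_{l,l'}$. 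When $l=l'$, the remaining factor $\widehat\mu(\lambda-\lambda')$ equals $\delta_{\lambda,\lambda'}$ by the spectral property of $\Lambda$ for $\mu$; the same computation shows that $(\lambda,l)\mapsto R^T\lambda+l$ is injective on $\Lambda\times L$, so $E(\S\Lambda)$ is an honest orthonormal sequence.

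For completeness I would feed $\Gamma=\S\Lambda$ and $\nu=\T\mu$ into the Jorgensen--Pedersen sum. The $\Pi(B)$-condition again erases the $\lambda$-dependence from $M_B$, and the double sum factors as
\[
\sum_{l\in L}\bigl|M_B(R^{-T}(l-\xi))\bigr|^2\sum_{\lambda\in\Lambda}\bigl|\widehat\mu\bigl(\lambda-R^{-T}(\xi-l)\bigr)\bigr|^2.
\]
The inner sum equals $1$ for every $l$ because $\Lambda$ is a spectrum for $\mu$. For the outer sum I would write $(M_B(R^{-T}(l-\xi)))_{l\in L}=\frac{1}{\sqrt N}U^Tw$ where $U=\frac{1}{\sqrt N}(e^{2\pi i R^{-1}b\cdot l})_{b,l}$ is the Hadamard unitary and $w=(e^{-2\pi i R^{-1}b\cdot\xi})_{b\in B}$; unitarity of $U^T$ gives $\sum_l|M_B|^2=\frac{1}{N}\|w\|^2=1$.

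The main subtlety is invoking the Hadamard condition in its two distinct guises---row-orthogonality kills the off-diagonal terms in the orthonormality check, and the unitary Parseval identity supplies the constant $1$ in the completeness check---while using $\Lambda\subset\Pi(B)$ at each step to strip the $\lambda$-contribution from $M_B$. Once those roles are disentangled, the remaining manipulation is formal.
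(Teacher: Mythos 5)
Your proposal is correct and takes essentially the same route as the paper: the Fourier-side scaling relation $\widehat{\T\mu}(\xi)=m_{B}(\Rsi\xi)\,\widehat\mu(\Rsi\xi)$, the Jorgensen--Pedersen sum criterion, the incongruence of $L$ modulo $\Rs\Pi(B)$ (your injectivity of $(\lambda,l)\mapsto \Rs\lambda+l$), and the $\Pi(B)$-periodicity of $m_{B}$. The only differences are presentational: your unitarity computation of $\sum_{l\in L}|m_{B}(\Rsi(\xi+l))|^2=1$ is exactly the identity $R_{B,L}1=1$ that the paper cites from the literature, and your separate orthonormality check is redundant given the criterion, but harmless.
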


We need some lemmas:

\begin{lemma}\cite{DJ06b}\label{lem1}
Let $\mu_{B}$ be the invariant measure for the IFS $(\tau_b)_{b\in
B}$, i.e., $\T\mu_{B}=\mu_{B}$. Then
\begin{equation}\label{eqscal}
\widehat\mu_{B}(x)=m_{B}(\Rsi x)\widehat\mu_{B}(\Rsi x),\quad(x\in
\br^d)
\end{equation}
where
\begin{equation}
m_{B}(x)=\frac1N\sum_{b\in B}e^{2\pi ib\cdot x}\,(x\in\br^d)
\label{eqchib}
\end{equation}

A set $\Lambda$ is a spectrum for a Borel probability measure
$\mu$ iff
\begin{equation}
\sum_{\lambda\in\Lambda}|\widehat\mu(x+\lambda)|^2=1,\quad(x\in\br^d)
\label{eqortho}
\end{equation}
\end{lemma}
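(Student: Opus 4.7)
The lemma has two parts, which I would treat in turn.

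For the scaling identity \eqref{eqscal}, the plan is to substitute the test function $f(t) = e^{2\pi i x\cdot t}$ into the invariance relation \eqref{eqtmu2} with $\mu = \mu_B$. Since $\tau_b(t) = R^{-1}(t+b)$, one has $x\cdot\tau_b(t) = (R^{-T}x)\cdot(t+b) = (\Rsi x)\cdot(t+b)$, giving the factorization
$$e^{2\pi i x\cdot\tau_b(t)} = e^{2\pi i\, b\cdot\Rsi x}\, e^{2\pi i(\Rsi x)\cdot t}.$$
The $t$-independent prefactor pulls out of the $\mu_B$ integral; the $t$-dependent factor integrates to $\widehat\mu_B(\Rsi x)$; and the average $\frac{1}{N}\sum_{b\in B}$ of the prefactor is by definition \eqref{eqchib} equal to $m_B(\Rsi x)$. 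Reassembling yields \eqref{eqscal}.

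For the Parseval-type characterization \eqref{eqortho}, I would first record the elementary identities
$$\langle e_x, e_\lambda\rangle_{L^2(\mu)} = \int e^{2\pi i(x-\lambda)\cdot t}\,d\mu(t) = \widehat\mu(x-\lambda),\qquad \|e_x\|_{L^2(\mu)}^2 = \widehat\mu(0) = 1.$$
Since $\mu$ is a positive measure one has $|\widehat\mu(-\xi)| = |\widehat\mu(\xi)|$, and since \eqref{eqortho} is asserted for every $x\in\br^d$, that condition is equivalent to
$$\sum_{\lambda\in\Lambda}|\langle e_x, e_\lambda\rangle|^2 = \|e_x\|^2\quad\text{for all }x\in\br^d.$$
The forward implication is then immediate: if $\{e_\lambda\}_{\lambda\in\Lambda}$ is an orthonormal basis of $L^2(\mu)$, Parseval's identity applied to $f = e_x$ gives exactly this equation.

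For the converse I would separate orthonormality from completeness. Specializing $x = \lambda_0\in\Lambda$ in the displayed equation isolates the diagonal term $|\langle e_{\lambda_0}, e_{\lambda_0}\rangle|^2 = 1$, which already accounts for the full sum of $1$, forcing $\langle e_{\lambda_0}, e_\lambda\rangle = 0$ for every $\lambda\neq\lambda_0$; thus $\{e_\lambda\}_{\lambda\in\Lambda}$ is orthonormal. For completeness I would invoke the equality case of Bessel's inequality: for an orthonormal family, $\sum_\lambda|\langle f, e_\lambda\rangle|^2 = \|f\|^2$ holds if and only if $f$ belongs to $\overline{\operatorname{span}}\,\{e_\lambda\}$. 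The hypothesis supplies this equality for every $f = e_x$, so the closed span of $\{e_\lambda\}$ contains every exponential, and density of $\operatorname{span}\{e_x : x\in\br^d\}$ in $L^2(\mu)$ --- standard by Stone--Weierstrass on $\operatorname{supp}\mu$ whenever $\mu$ is compactly supported, as is the case for any IFS invariant measure --- completes the argument. This last density step is the only place where a genuine external input is used; everything else is a direct computation with the definitions of $\widehat\mu$, $\T$, and Parseval/Bessel.
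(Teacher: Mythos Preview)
The paper does not supply a proof of this lemma; it is quoted from \cite{DJ06b}, so there is no in-paper argument to compare against. Your proof is correct in both parts. The derivation of \eqref{eqscal} by plugging $f=e_x$ into the invariance relation \eqref{eqtmu2} is exactly the standard one, and your treatment of \eqref{eqortho}---Parseval for the forward direction, specialization $x=\lambda_0$ for orthonormality, then Bessel-equality plus density of exponentials for completeness---is clean and complete. One small remark: the lemma as stated speaks of an arbitrary Borel probability measure $\mu$, not just a compactly supported one; your Stone--Weierstrass justification covers the compactly supported case (which is all that is used in the paper), but for the general statement one would instead argue that if $f\in L^2(\mu)$ is orthogonal to every $e_x$ then the finite complex measure $f\,d\mu$ has identically vanishing Fourier transform and hence is zero. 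You already flag this restriction explicitly, so this is a comment rather than a gap.
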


\begin{definition}\label{defh}
For a measure $\mu$ on $\br^d$ and a subset $\Lambda$ of $\br^d$,
define
$$h_{\mu,\Lambda}(x)=\sum_{\lambda\in\Lambda}|\widehat\mu(x+\lambda)|^2,\quad(x\in\br^d)$$

For a pair of subsets $(B,L)$ of $\bz^d$ we define the {\it
transfer operator} on functions on $\br^d$:
$$(R_{B,L}f)(x)=\sum_{l\in L}|m_{B}(\Rsi(x+l))|^2f(\Rsi(x+l)),\quad(x\in\br^d).$$
\end{definition}

\begin{lemma}\label{pr1} Let $(B,L)$ be a Hadamard pair.
\begin{enumerate}
    \item For any measure $\mu$, we have
    $$\widehat{(\T\mu)}(x)=m_{B}(\Rsi x)\widehat\mu(\Rsi x),\quad(x\in\br^d)$$
    \item For any measure $\mu$ and any subset $\Lambda$ of
    $\Pi(B)$, we have
    $$R_{B,L}h_{\mu,\Lambda}=h_{\T\mu,\S\Lambda}.$$
\end{enumerate}
\end{lemma}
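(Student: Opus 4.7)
My plan is to prove (i) by a direct calculation from the defining relation \eqref{eqtmu2}, and then derive (ii) by expanding $h_{\T\mu,\S\Lambda}$ term by term, substituting (i), and exploiting that $\Lambda\subset\Pi(B)$ makes $m_B$ invariant under translation by elements of $\Lambda$. For (i), I would take $f(t)=e^{2\pi i x\cdot t}$ in \eqref{eqtmu2}. Since $\tau_b(t)=R^{-1}(t+b)$, one has $x\cdot\tau_b(t)=\Rsi x\cdot t+\Rsi x\cdot b$, so the integral factors as $e^{2\pi i\Rsi x\cdot b}\widehat\mu(\Rsi x)$; averaging over $b\in B$ recognizes the resulting $b$-sum as $m_B(\Rsi x)$ by \eqref{eqchib}, yielding $\widehat{\T\mu}(x)=m_B(\Rsi x)\widehat\mu(\Rsi x)$.

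For (ii), I would parametrize $\S\Lambda$ as the image of $L\times\Lambda$ under the map $(l,\lambda)\mapsto R^T\lambda+l$, which I claim is injective (argument in the next paragraph). Applying (i) at $x+R^T\lambda+l$ and using $\Rsi(x+R^T\lambda+l)=\Rsi(x+l)+\lambda$, each summand becomes
\[
|\widehat{\T\mu}(x+R^T\lambda+l)|^2=|m_B(\Rsi(x+l)+\lambda)|^2\,|\widehat\mu(\Rsi(x+l)+\lambda)|^2.
\]
Because $b\cdot\lambda\in\bz$ for every $b\in B$ and $\lambda\in\Lambda\subset\Pi(B)$, the phases $e^{2\pi i b\cdot\lambda}$ are trivial, so $m_B(\Rsi(x+l)+\lambda)=m_B(\Rsi(x+l))$. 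Pulling this factor outside the $\lambda$-sum collapses the inner sum to $h_{\mu,\Lambda}(\Rsi(x+l))$, and the outer sum over $l\in L$ then matches the definition of $R_{B,L}h_{\mu,\Lambda}$ at $x$.

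The one place where nontrivial content is needed is the injectivity of $(l,\lambda)\mapsto R^T\lambda+l$, since $h_{\T\mu,\S\Lambda}$ is defined as a sum over the \emph{set} $\S\Lambda$ and would overcount if different pairs produced the same element. This is exactly where the Hadamard hypothesis enters: if $R^T\lambda_1+l_1=R^T\lambda_2+l_2$, then $\Rsi(l_2-l_1)=\lambda_1-\lambda_2\in\Pi(B)$, so $m_B(\Rsi(l_2-l_1))=1$; but unitarity of the Hadamard matrix translates to the column-orthogonality relation $m_B(\Rsi(l-l'))=0$ for distinct $l,l'\in L$, forcing $l_1=l_2$, and then invertibility of $R^T$ forces $\lambda_1=\lambda_2$.
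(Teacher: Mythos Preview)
Your proof is correct and follows essentially the same route as the paper: part (i) is the direct Fourier computation from \eqref{eqtmu2}, and part (ii) proceeds by expanding $h_{\T\mu,\S\Lambda}$ over the disjoint pieces $R^T\Lambda+l$, applying (i), and using the $\Pi(B)$-periodicity of $m_B$. Your injectivity argument via $m_B(\Rsi(l-l'))=0$ is exactly the paper's assertion that the elements of $L$ are incongruent $\bmod\ R^T\Pi(B)$, just spelled out in more detail.
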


\begin{proof}
(i) follows from \eqref{eqtmu2} by taking the Fourier transform in the definition
of $\T\mu$.

For (ii) we compute $h_{\T\mu,\S\Lambda}$. Since $(B,L)$ form a
Hadamard pair it is easy to see that the elements of $L$ are
incongruent $\mod \Rs\Pi(B)$. Therefore, if
$\Lambda\subset\Pi(B)$, then the sets $\Rs\Lambda+l$, $l\in L$ are
disjoint. We then have, using (i):
$$h_{\T\mu,S\Lambda}(x)=\sum_{l\in L}\sum_{\lambda\in\Lambda}|\widehat{\T\mu}(x+\Rs\lambda+l)|^2=
\sum_{l\in
L}\sum_{\lambda\in\Lambda}|m_{B}\left(\Rsi(x+l)+\lambda\right)|^2\left|\widehat\mu\left(\Rsi(x+l)+\lambda\right)\right|^2$$
$$=R_{B,L}h_{\mu,\Lambda}(x),$$
where we used the fact that $m_{B}(y+\lambda)=m_{B}(y)$ for
$\lambda\in\Pi(B)$.

\end{proof}

\begin{proof}[Proof of Proposition \ref{cor2}]
From Lemma \ref{lem1} we know that we have to check that
$h_{\T\mu,S\Lambda}=1$, and we know that $h_{\mu,\Lambda}=1$. But
from Lemma \ref{pr1} we have
$$h_{\T\mu,S\Lambda}=R_{B,L}h_{\mu,\Lambda}=R_{B,L}1=1.$$
In the last equality we used the fact that $(B,L)$ is a Hadamard
pair, which implies $R_{B,L}1=1$
(see \cite{DJ06b}).
\end{proof}

\begin{corollary}\label{cor3}
Let $(B,L)$ be a Hadamard pair. Define the operator $\M$ on
subsets $K$ of $\br^d$ by
\begin{equation}
\M K=\bigcup_{b\in B}\tau_b(K). \label{eqm}
\end{equation}

Let $Q=[0,1)^d$ be the unit cube, and let $\mu_0$ be the Lebesgue
measure on $Q$. Then the measure $\T^n\mu_0$ is the Lebesgue
measure on the disjoint union
$$\M^n Q=\bigcup_{b_0,\dots,b_{n-1}\in B}\tau_{b_n}\dots\tau_{b_0}Q,$$
with renormalization factor $\frac{|\det R|^n}{N^n}$. Moreover,
$\T^n\mu_0$ is a spectral measure with spectrum $\S^n\bz^d$.

\end{corollary}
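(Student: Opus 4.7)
The plan is to establish both parts of the corollary by separate inductions on $n$, with the spectral statement obtained by iterating Proposition \ref{cor2}.

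For the geometric/measure-theoretic part, I would first derive the closed-form identity
$\tau_{b_{n-1}}\circ\cdots\circ\tau_{b_0}(Q)=R^{-n}\bigl(Q+\sum_{k=0}^{n-1}R^k b_k\bigr)$
by an easy induction using $\tau_b(x)=R^{-1}(x+b)$. This realizes the $N^n$ pieces of $\M^n Q$ as translates of the small cube $R^{-n}Q$ by $R^{-n}v$ for integer vectors $v\in\bz^d$. Since distinct integer translates of $Q$ are essentially disjoint, the essential disjointness of $\M^n Q$ reduces to injectivity of the digit map $(b_0,\dots,b_{n-1})\mapsto\sum_k R^k b_k$ on $B^n$. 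Given that, the equality $\T^n\mu_0=\tfrac{|\det R|^n}{N^n}\,\text{Leb}|_{\M^n Q}$ would follow by a second induction: insert the definition \eqref{eqtmu1} of $\T$, change variables via $\tau_b^{-1}$ (which has constant Jacobian $|\det R|$), and use disjointness so that the sum over $b\in B$ combines without double counting into Lebesgue measure on $\M^n Q$.

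The injectivity of the digit map is the one genuinely non-bookkeeping step. I would extract it from the Hadamard-pair hypothesis: if two elements $b,b'\in B$ satisfied $b-b'\in R\bz^d$, then for every integer $l\in L$ the exponentials $e^{2\pi iR^{-1}b\cdot l}$ and $e^{2\pi iR^{-1}b'\cdot l}$ would agree, making two rows of the unitary matrix in Definition \ref{defhada} equal; so the elements of $B$ must lie in distinct cosets of $R\bz^d$ in $\bz^d$. From an equality $\sum_k R^k(b_k-b_k')=0$ one then reads off $b_0\equiv b_0'\pmod{R\bz^d}$, hence $b_0=b_0'$, divides by $R$, and inducts on $n$.

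For the spectral claim, I would iterate Proposition \ref{cor2} starting from the classical fact that $\bz^d$ is a spectrum for Lebesgue measure on $[0,1)^d$. Proposition \ref{cor2} requires the current spectrum to lie in $\Pi(B)$, so I would verify the stronger invariance $\S^n\bz^d\subset\bz^d$ by another short induction using $\S\Lambda=\bigcup_{l\in L}(R^T\Lambda+l)$, the fact that $R$ is an integer matrix, and $L\subset\bz^d$ (the standard Hadamard-pair setting). Since $B\subset\bz^d$ immediately gives $\bz^d\subset\Pi(B)$, Proposition \ref{cor2} applies at every stage, yielding inductively that $\T^n\mu_0$ is a spectral measure with spectrum $\S^n\bz^d$.

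The chief obstacle is the digit-injectivity argument behind essential disjointness of $\M^n Q$; everything else reduces to routine changes of variables and a direct iteration of Proposition \ref{cor2}.
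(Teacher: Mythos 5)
Your proposal is correct and follows essentially the same route as the paper: disjointness of the pieces of $\M^n Q$ via the digits of $B$ being incongruent $\bmod\ R\bz^d$, the change-of-variables computation with Jacobian $|\det R|$ for the measure identity, and iteration of Proposition \ref{cor2} (using $\bz^d\subset\Pi(B)$ and $L\subset\bz^d$) for the spectral claim. The only difference is that you spell out details the paper asserts without proof, namely why the Hadamard condition forces distinct residues of $B$ modulo $R\bz^d$ and why the iterated spectra stay inside $\Pi(B)$.
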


\begin{proof}
First we check that the union that gives $\M^n Q$ is disjoint. If
not, then there exist $b_0,\dots,b_{n-1}$ and
$b_0',\dots,b_{n-1}'$ in $B$ and $x,x'\in Q$ such that
$$R^{-n}(x+b_0+Rb_1+\dots+R^{n-1}b_{n-1})=R^{-n}(x'+b_0'+Rb_1'+\dots+R^{n-1}b_{n-1}').$$
but this implies that $x-x'\in\bz^d$ and this is impossible,
unless $x=x'$. Since $(B,L)$ is a Hadamard pair, the points in $B$
are incongruent $\mod R\bz^d$. And therefore we get $b_0=b_0',
\dots, b_{n-1}=b_{n-1}'$. Thus the union is disjoint.

Next we compute the measure $\T^n\mu$. It is enough to take $n=1$,
the general case is analogous. We have for $f$ continuous on
$\br^d$
$$\int f\,d\T\mu_0=\frac{1}{N}\sum_{b\in B}\int_Qf(R^{-1}(x+b))\,dx=\frac{|\det R|}{N}\sum_{b\in B}\int_{\tau_b(Q)}f(y)\,dy.$$
This shows that $\T\mu_0$ is Lebesgue measure on $\M Q$
renormalized by $|\det R|/N$.

The fact that $\S^n\bz^d$ is a spectrum for $\T^n\mu_0$ follows
from Proposition \ref{cor2}.
\end{proof}

\begin{proposition}\label{pr15}
Let $(B,L)$ be a Hadamard pair. Let $\Lambda_0$ be a subset of
$\Pi(B)$ and assume that $\Lambda_0$ is the spectrum of some Borel
probability measure $\mu_0$ on $\br^d$ and
$\S\Lambda_0\subset\Lambda_0$. Then the set
$$\Lambda:=\bigcap_{n\geq0} \S^n\Lambda_0$$
is orthogonal in $L^2(\mu_B)$.

\end{proposition}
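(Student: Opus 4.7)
The plan is to show that for every pair of distinct $\lambda,\lambda'\in\Lambda$, one has $\widehat{\mu_B}(\lambda-\lambda')=0$, which is exactly the orthogonality of $e_\lambda$ and $e_{\lambda'}$ in $L^2(\mu_B)$. The three ingredients I will use are: monotonicity of the set-operator $\S$ (so that the sequence $\S^n\Lambda_0$ is decreasing and remains inside $\Pi(B)$); iteration of Proposition \ref{cor2} to identify each $\S^n\Lambda_0$ as a spectrum for $\T^n\mu_0$; and a telescoping argument at the level of Fourier transforms to identify $\lim_n\widehat{\T^n\mu_0}$ with $\widehat{\mu_B}$ at the single frequency $\lambda-\lambda'$.

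Concretely, since $\S\Lambda=\bigcup_{l\in L}(R^T\Lambda+l)$ is monotone in $\Lambda$, the hypothesis $\S\Lambda_0\subset\Lambda_0$ gives $\S^{n+1}\Lambda_0\subset\S^n\Lambda_0$ for every $n$, so the family $\{\S^n\Lambda_0\}_{n\geq 0}$ is a decreasing chain of subsets of $\Pi(B)$. This is precisely the containment needed to iterate Proposition \ref{cor2}: starting from the hypothesis that $\Lambda_0$ is a spectrum for $\mu_0$, one obtains by induction that $\S^n\Lambda_0$ is a spectrum for $\T^n\mu_0$ for every $n$. Consequently the exponentials $E(\S^n\Lambda_0)$ are orthogonal in $L^2(\T^n\mu_0)$, which translates to $\widehat{\T^n\mu_0}(\lambda-\lambda')=0$ for any two distinct $\lambda,\lambda'\in\S^n\Lambda_0$. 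Fixing $\lambda\neq\lambda'$ in the intersection $\Lambda$, both points lie in every $\S^n\Lambda_0$, so this identity holds for all $n$.

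For the passage to the limit, I iterate the scaling identity of Lemma \ref{pr1}(i) and the analogous identity \eqref{eqscal} from Lemma \ref{lem1} to obtain, for $n\geq 1$,
\[\widehat{\T^n\mu_0}(x)=\widehat{\mu_0}\!\left((R^T)^{-n}x\right)\prod_{k=1}^{n}m_{B}\!\left((R^T)^{-k}x\right),\quad \widehat{\mu_B}(x)=\widehat{\mu_B}\!\left((R^T)^{-n}x\right)\prod_{k=1}^{n}m_{B}\!\left((R^T)^{-k}x\right).\]
Since $|m_B|\leq 1$, subtracting these identities gives $\bigl|\widehat{\T^n\mu_0}(x)-\widehat{\mu_B}(x)\bigr|\leq\bigl|\widehat{\mu_0}((R^T)^{-n}x)-\widehat{\mu_B}((R^T)^{-n}x)\bigr|$; expansivity of $R$ forces $(R^T)^{-n}x\to 0$, and continuity of $\widehat{\mu_0}$ and $\widehat{\mu_B}$ at the origin (where both equal $1$, being Fourier transforms of probability measures) kills the right-hand side. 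Setting $x=\lambda-\lambda'$ yields $\widehat{\mu_B}(\lambda-\lambda')=\lim_n\widehat{\T^n\mu_0}(\lambda-\lambda')=0$, which is the required orthogonality.

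The one step that takes a moment of thought is the limit identification in the third paragraph. Rather than invoke weak convergence $\T^n\mu_0\to\mu_B$ (which would need separate justification and topological hypotheses on $\mu_0$), I prefer to telescope through the common infinite-product structure of $\widehat{\T^n\mu_0}$ and $\widehat{\mu_B}$, so that the whole limit argument is reduced to continuity of two Fourier transforms at a single point.
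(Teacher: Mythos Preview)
Your proof is correct and follows the same overall structure as the paper's: iterate Proposition~\ref{cor2} to see that $\S^n\Lambda_0$ is a spectrum for $\T^n\mu_0$, deduce $\widehat{\T^n\mu_0}(\lambda-\lambda')=0$ for all $n$, and pass to the limit. The difference lies in the limit step. The paper simply cites Hutchinson \cite{Hut81} for the weak convergence $\T^n\mu_0\to\mu_B$ and then tests against the bounded continuous function $e_{\lambda_1-\lambda_2}$. You instead establish pointwise convergence of Fourier transforms directly, factoring both $\widehat{\T^n\mu_0}(x)$ and $\widehat{\mu_B}(x)$ through the common finite product $\prod_{k=1}^n m_B((R^T)^{-k}x)$ and using continuity of $\widehat{\mu_0}$ and $\widehat{\mu_B}$ at the origin. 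Your route is slightly longer but more self-contained: it avoids any implicit support or moment hypotheses on $\mu_0$ that a bare citation of Hutchinson's contraction argument might carry, and you are also more explicit than the paper about why the iteration of Proposition~\ref{cor2} is legitimate (namely, monotonicity of $\S$ together with $\S\Lambda_0\subset\Lambda_0$ keeps every $\S^n\Lambda_0$ inside $\Pi(B)$). The paper's version is terser once weak convergence is granted.
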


\begin{proof}
For $n\in\bn$, let $\mu_n:=\T^n\mu_0$. With Proposition
\ref{cor2}, we have that $\mu_n$ is a spectral measure with
spectrum $\S^n\Lambda_0$.

 Also, from \cite{Hut81} we know that $\mu_n=\T^n\mu_0$ converges weakly to $\mu_B$.
 Take two distinct $\lambda_1,\lambda_2$ in $\Lambda$. Then $\lambda_1,\lambda_2$ are in $\S^n\Lambda_0$ for all $n$, and since $\S^n\Lambda_0$ is a spectrum for $\mu_n$, we have
 $$\int e^{2\pi i(\lambda_1-\lambda_2)\cdot x}\,d\mu_n(x)=0.$$
 Since $\mu_n$ converges to $\mu_B$ weakly, we get that
 $$\int e^{2\pi i(\lambda_1-\lambda_2)\cdot x}\,d\mu_B(x)=0.$$
 This proves that $\Lambda$ is orthogonal in $L^2(\mu_B)$.
\end{proof}

\begin{theorem}\label{th4}
In dimension $d=1$, suppose $(B,L)$ is a Hadamard pair. Then the
measure $\mu_B$ is a spectral measure with spectrum
$$\Lambda=\bigcap_{n\geq 0}\S^n\left(\Pi(B)\right).$$
\end{theorem}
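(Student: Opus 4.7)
The plan is to apply Proposition~\ref{pr15} with $\Lambda_0 = \Pi(B)$ to obtain orthogonality, then upgrade to completeness via a truncation argument that exploits the weak convergence $\mu_n := \T^n\mu_0 \to \mu_B$ together with the discrete nature of $\Pi(B)$ in dimension one.

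First I would choose the initial measure. In dimension one, $\Pi(B) = \frac{1}{d}\bz$ with $d = \gcd(B)$, so the normalized Lebesgue measure on $[0,d]$ has $\Pi(B)$ as a spectrum. One then checks $\S\Pi(B) \subseteq \Pi(B)$: since $R$ is an integer, $R^T\Pi(B) \subseteq \Pi(B)$ is automatic, and in this one-dimensional setting the Hadamard pair condition pins $L$ down modulo a harmless translation so that $L \subseteq \Pi(B)$. By induction $\S^n\Pi(B) \subseteq \Pi(B)$, hence Proposition~\ref{pr15} applies and yields that $\Lambda := \bigcap_{n\geq 0}\S^n\Pi(B)$ is orthogonal in $L^2(\mu_B)$.

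For completeness, I would iterate Proposition~\ref{cor2} to conclude that $\S^n\Pi(B)$ is a spectrum for $\mu_n$, so Parseval's identity holds for each $n$:
$$\|f\|_{L^2(\mu_n)}^2 = \sum_{\lambda \in \S^n\Pi(B)}\bigl|\langle f, e_\lambda\rangle_{L^2(\mu_n)}\bigr|^2.$$
Fix $M>0$. Because $\Pi(B)\cap[-M,M]$ is a finite set, the decreasing sequence $\S^n\Pi(B)\cap[-M,M]$ must stabilize to $\Lambda\cap[-M,M]$ at some finite stage $n \geq N(M)$. Restricting the Parseval sum to this window gives $\|f\|_{L^2(\mu_n)}^2 \geq \sum_{\lambda\in \Lambda\cap[-M,M]}|\langle f,e_\lambda\rangle_{L^2(\mu_n)}|^2$. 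Letting $n\to\infty$, using weak convergence of $\mu_n$ to $\mu_B$ to handle both the finitely many inner products and the norm $\int |f|^2\,d\mu_n \to \int|f|^2\,d\mu_B$, and then letting $M\to\infty$, one obtains $\|f\|_{L^2(\mu_B)}^2 \geq \sum_{\lambda\in\Lambda}|\langle f,e_\lambda\rangle_{L^2(\mu_B)}|^2$ for every continuous $f$ on $X_B$. The reverse inequality is Bessel's inequality from the orthogonality established above, so Parseval holds on $C(X_B)$ and extends to all of $L^2(\mu_B)$ by density.

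The main obstacle is justifying the stabilization of $\S^n\Pi(B) \cap [-M,M]$ for large $n$, which is where the $d=1$ hypothesis is essential: in one dimension $\Pi(B)$ is a discrete subgroup of $\br$, so bounded windows contain only finitely many candidate frequencies, and nested finite sets must eventually stabilize. In higher dimensions the analogous sets can remain infinite inside bounded windows and the truncation breaks down. A secondary issue to make explicit is the verification of $L \subseteq \Pi(B)$ from the Hadamard pair condition (the zero set of $m_B$ lies in a lattice of the same form as $\Pi(B)$ in this setting), which is needed so that $\S\Pi(B) \subseteq \Pi(B)$ and the intersection defining $\Lambda$ is genuinely nested.
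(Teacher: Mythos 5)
There is a genuine gap, and it sits exactly at the completeness step. Your truncation argument runs in the wrong direction: starting from Parseval for $\mu_n$ with spectrum $\S^n\Pi(B)$ and \emph{discarding} all frequencies outside the stabilized window, you obtain
$\|f\|^2_{L^2(\mu_n)}\geq \sum_{\lambda\in\Lambda\cap[-M,M]}|\langle f,e_\lambda\rangle_{L^2(\mu_n)}|^2$, and after letting $n\to\infty$ and $M\to\infty$ this gives
$\|f\|^2_{L^2(\mu_B)}\geq\sum_{\lambda\in\Lambda}|\langle f,e_\lambda\rangle_{L^2(\mu_B)}|^2$. That is Bessel's inequality, which you already have from orthogonality; the inequality needed for completeness is the reverse one, $\|f\|^2\leq\sum_{\lambda\in\Lambda}|\langle f,e_\lambda\rangle|^2$, and it is \emph{not} ``Bessel's inequality from the orthogonality established above,'' as you assert at the end --- Bessel goes the same way as what you derived. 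The hard part is precisely to control the mass you threw away, i.e.\ to show that $\sum_{\lambda\in \S^n\Pi(B)\setminus\Lambda}|\langle f,e_\lambda\rangle_{L^2(\mu_n)}|^2$ does not survive in the limit, and the proposal offers no estimate for it. The flaw is structural, not cosmetic: the only place $d=1$ enters your argument is the discreteness of $\Pi(B)$, but in the two-dimensional example of Remark \ref{18} the set $\Pi(B)=\bz\times\frac13\bz$ is equally discrete, the same stabilization of $\S^n\Pi(B)$ on bounded windows holds, and yet $\bigcap_n\S^n\Pi(B)$ is \emph{incomplete} there. So any proof along your lines would also ``prove'' the false higher-dimensional statement.

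By contrast, the paper does not attempt a direct limiting Parseval argument. It invokes the one-dimensional theorem of \cite{DJ06b} that $\mu_B$ is spectral with spectrum $\tilde\Lambda$, the smallest $\S$-invariant set containing $-C$ for every $B$-extreme $L$-cycle $C$, and then proves the purely combinatorial identity $\Lambda=\tilde\Lambda$: using $\Pi(B)=\Gamma(B)^\circ$ and Lemma \ref{th3} one gets $\S\Lambda\subset\Lambda$ and that $\Lambda$ contains all extreme cycles (so $\tilde\Lambda\subset\Lambda$), and conversely every $x_0\in\Lambda$ can be pulled back uniquely through the maps $x\mapsto \Rsi(x+l)$ until it lands in a $B$-extreme $L$-cycle (so $\Lambda\subset\tilde\Lambda$). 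Your first two steps (the choice of $\mu_0$ with spectrum $\Pi(B)=\frac1d\bz$, the inclusion $\S\Pi(B)\subset\Pi(B)$, and orthogonality via Proposition \ref{pr15}) are fine, but they only reproduce the easy half; to complete the proof you must either import the cycle-based spectrality result of \cite{DJ06b} as the paper does, or supply a genuinely new completeness argument that uses more of the $d=1$ structure than discreteness of $\Pi(B)$.
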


\begin{proof}
We use the result in \cite{DJ06b} that states that $\mu_B$ is a
spectral measure with spectrum $\tilde\Lambda$ - the smallest
$\S$-invariant set that contains $-C$ for all $B$-extreme
$L$-cycles $C$.

\begin{definition}\label{def4}

Define the IFS
$$\tau_l^{(L)}(x)=\Rsi(x+l),\quad(x\in\br^d).$$
To simplify the notation we will use just $\tau_l$ for
$\tau_l^{(L)}$, the subscript $l$ will indicate that we use the
map $\tau_l^{(L)}$ and the subscript $b$ will indicate that we use
the map $\tau_b$.

We say that a finite set $C=\{x_0,x_1,\dots, x_{p-1}\}$ is an
$L$-cycle, if there exists $l_0,l_1,\dots,l_{p-1}\in L$ such that
$$\tau_lx_i=x_{i+1},\quad i\in\{0,\dots,p-1\},$$
where $x_p:=x_0$.

We say that this cycle is $B$-extreme if
$$|m_B(x_i)|=1\mbox{ for all }i\in\{0,\dots,p-1\}.$$

\end{definition}

Thus, we have to prove only that $\Lambda=\cap_n \S^n\Pi(B)$ is
equal to $\tilde\Lambda$.

We need the following lemma:
\begin{lemma}\label{th3}\cite[Theorem 4.1]{DJ07a}
Assume $(B,L)$ is a Hadamard pair.
Suppose there exist $d$ linearly independent vectors in the set
\begin{equation}
\Gamma(B):=\left\{\sum_{k=0}^nR^kb_k : b_k\in B, n\in \bn\right\}.
\label{eq1.15}
\end{equation}
Define
\begin{equation}
\Gamma(B)^\circ:=\left\{ x\in\br^d : \beta\cdot x\in\bz\mbox{ for
all }\beta\in\Gamma(B)\right\}. \label{eq1.16}
\end{equation}
Then $\Gamma(B)^\circ$ is a lattice that contains $\bz^d$ which is
invariant under $\Rs$, and if $l,l'\in L$ with $l-l'\in
\Rs\Gamma(B)^\circ$ then $l=l'$. Moreover
\begin{equation}
\Gamma(B)^\circ\cap X_L\supset\bigcup\left\{ C : C\mbox{ is a
$B$-extreme $L$-cycle}\right\}. \label{eq1.17}
\end{equation}

\end{lemma}

Note first that $\Pi(B)=\Gamma(B)^\circ$, because if
$b\gamma\in\bz$ then $R^kb\gamma\in\bz$ for all $k\geq 0$.

Since $L$ is contained in $\bz$, we have $\S\Gamma(B)^\circ\subset
\Gamma(B)^\circ$ and this implies that $\S\Lambda\subset\Lambda$.
Also, since all $B$-extreme $L$-cycles are contained in
$\Gamma(B)^\circ$, and since $\S(-C)\supset -C$ if $C$ is a cycle,
we obtain that $\Lambda$ contains all these cycles. Therefore
$\tilde \Lambda\subset \Lambda$.

Now take $x_0\in\Lambda$. Then, since $x_0\in\S\Gamma(B)^\circ$,
there is a $x_1\in\Gamma(B)^\circ$ and $l_0\in L$ such that $x_0=R
x_1+l_0$. Since elements in $L$ are incongruent mod
$R\Gamma(B)^\circ$, it follows that $l_0$ is uniquely determined
by $x_0$. So $x_1$ is also uniquely determined; and, since
$x_0\in\Lambda$ it follows that $x_1\in\Lambda$. This implies that
$\tau_{l_0}(-x_0)=-x_1$. By induction we can find an infinite
sequence $l_0,l_1,\dots,l_n\dots$ in $L$ such that
$x_{n+1}=-\tau_{l_n}\dots\tau_{l_0}(-x_0)$ is in $\Lambda$. But
then $x_{n+1}$ converges to the attractor $X_L$ of the IFS
$(\tau_l)_{l\in L}$ and it is also a sequence in
$\Lambda\subset\Gamma(B)^\circ$. Therefore from some point on the
sequence has to be in a cycle inside $X_L\cap\Gamma(B)^\circ$.
Since $|m_B(x)|=1$ for $x\in\Gamma(B)^\circ$, we see that this
implies that $x_n$ will land in one of the $B$-extreme $L$-cycles.

Hence, every point $x_0$ can be obtained from a cycle point after
the application of the operations $x\mapsto Rx+l$ , $l\in L$.
Therefore $\Lambda\subset\tilde\Lambda$.

\end{proof}

\begin{remark}\label{18}
Theorem \ref{th4} is false in higher dimensions. For example, take

$$R=\begin{bmatrix}
2&0\\
1&2\end{bmatrix},\quad B=\left\{ \begin{pmatrix} 0\\0
\end{pmatrix},
\begin{pmatrix}
1\\0
\end{pmatrix}
,
\begin{pmatrix}
0\\3
\end{pmatrix}
,
\begin{pmatrix}
1\\3
\end{pmatrix}
\right\}\quad L=\left\{
\begin{pmatrix}
0\\0
\end{pmatrix}
,
\begin{pmatrix}
1\\0
\end{pmatrix}
,
\begin{pmatrix}
0\\1
\end{pmatrix}
,
\begin{pmatrix}
1\\1
\end{pmatrix}
\right\}$$

\begin{figure}[ht]\label{fig1}
\centerline{ \vbox{\hbox{\epsfxsize 5cm\epsfbox{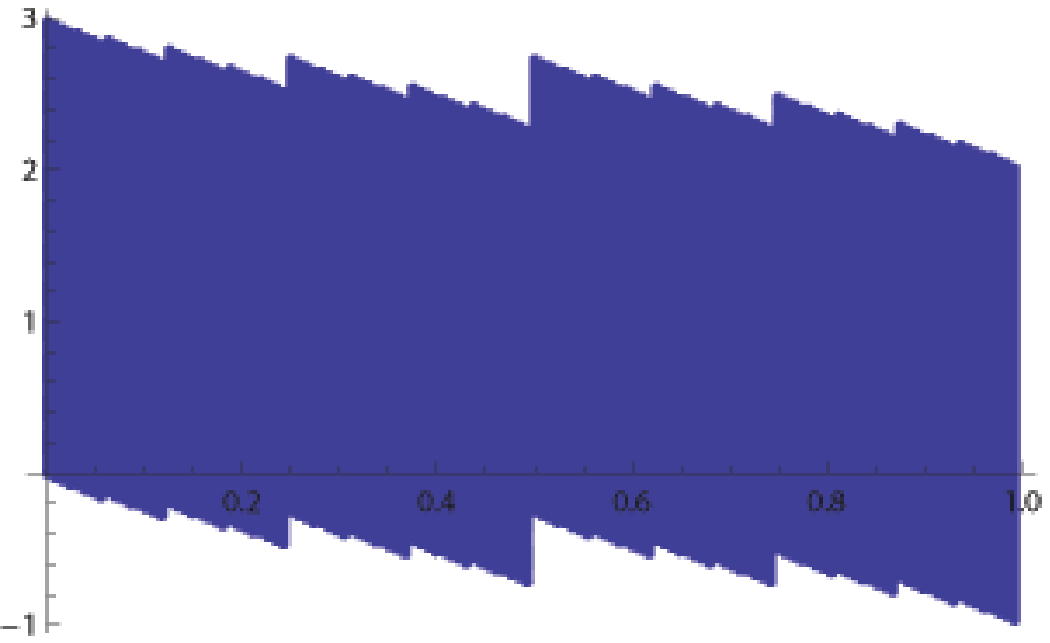}}} }
\caption{ The attractor $X(B)$.}
\end{figure}

We proved in \cite{DuJo10} that the measure $\mu_B$ is the
Lebesgue measure on the attractor $X(B)$ and it has spectrum
$\bz\times\frac13\bz$.

Note that the set $$\Pi(B)=\{\gamma\in\br^2 : \gamma\cdot
b\in\bz\mbox{ for all }b\in B\}$$ is in this case
$\Pi(B)=\bz\times\frac13\bz$.

It is easy to check that $(0,2/3)^T$ is not in $\S\Pi(B)$ and
therefore $\cap_{n\geq 0}\S^n\Pi(B)$ is a proper subset of
$\Pi(B)=\bz\times\frac13\bz$, thus it is incomplete.
\end{remark}

\section{Riesz bases, Bessel sequences and frames}

In Proposition \ref{prgr} and Theorem \ref{thb1} we give
characterization of Bessel spectra, first in terms of the Grammian
matrix, for a general measure, and then in terms of some finite
matrices for affine IFS measures. Theorem \ref{thb1} is then
reformulated in Theorem \ref{thb1re} in terms of an uniform bound
for some finite atomic measures. In Theorem \ref{thsufr} we give a
sufficient condition for a set to be a frame spectrum for an
affine IFS with no overlap, in terms of the same finite matrices.


\begin{proposition}\label{pr1.14}
Suppose the sequence of Borel measures $\mu_{n}$ on $\br^d$
converges to the Borel measure $\mu$ weakly.

\begin{enumerate}
\item Assume $\Lambda_{n}$ is a Riesz basic sequence for $\mu_{n}$
with bounds $A_n$ and $B_n$.  Suppose finally that $\lim A_n = A >
0$, and $\lim B_n = B < \infty$.  Then $\cap \Lambda_{n}$ is a
Riesz basic sequence for $\mu$ (provided the intersection is
nonempty).

\item
 Suppose $\Lambda_{n}$ is a Schauder basic sequence in $L^2(\mu_{n})$ with constant $C_n$.  Suppose $\lim C_n = C < \infty$.  Then $\cap \Lambda_{n}$ is a Schauder basic sequence in $L^2(\mu)$ with constant $C$.
 \end{enumerate}
\end{proposition}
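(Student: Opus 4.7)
The plan is to exploit weak convergence of measures against trigonometric polynomials. The key observation is that for any finite linear combination $p(x) = \sum_{k=1}^{K} c_k e_{\lambda_k}(x)$ with $\lambda_k \in \br^d$, the function $|p|^2$ is continuous and bounded on $\br^d$ (each exponential has modulus $1$, so $|p(x)|^2 \leq (\sum_k |c_k|)^2$). Therefore weak convergence $\mu_n \to \mu$ yields
$$\int |p|^2\, d\mu_n \longrightarrow \int |p|^2\, d\mu.$$
This is the one analytic input; everything else is passage to the limit in an inequality.

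For part (i), I would fix any finite set $\{\lambda_1,\dots,\lambda_K\}\subset \bigcap_n \Lambda_n$ and any scalars $c_1,\dots,c_K$. Since this finite set is contained in $\Lambda_n$ for every $n$, the Riesz basic sequence hypothesis in $L^2(\mu_n)$ gives
$$A_n \sum_{k=1}^{K} |c_k|^2 \;\leq\; \int \Bigl|\sum_{k=1}^{K} c_k e_{\lambda_k}\Bigr|^2 d\mu_n \;\leq\; B_n \sum_{k=1}^{K} |c_k|^2.$$
Letting $n\to\infty$, the outer bounds converge to $A\sum|c_k|^2$ and $B\sum|c_k|^2$ respectively, while the middle term converges to $\int|\sum c_k e_{\lambda_k}|^2 d\mu$ by the weak convergence observation above. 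Hence the Riesz inequalities hold in $L^2(\mu)$ with bounds $A$ and $B$, proving $\bigcap \Lambda_n$ is a Riesz basic sequence.

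For part (ii), the argument is essentially the same but one must be careful about ordering, since the Schauder basis constant concerns partial sums along a fixed enumeration. Fix an enumeration of $\bigcap \Lambda_n$ that is a subsequence of the enumeration of each $\Lambda_n$ (the natural ordering inherited from any one $\Lambda_n$ suffices, provided the intersection is a subsequence of each; this is where a tiny bit of care is needed). Then for scalars $c_1,\dots,c_K$ and any $1\leq k\leq K$, the Schauder inequality in $L^2(\mu_n)$ gives
$$\int \Bigl|\sum_{i=1}^{k} c_i e_{\lambda_i}\Bigr|^2 d\mu_n \;\leq\; C_n^2 \int \Bigl|\sum_{i=1}^{K} c_i e_{\lambda_i}\Bigr|^2 d\mu_n.$$
Passing to the limit $n\to\infty$ exactly as before yields the Schauder inequality in $L^2(\mu)$ with constant $C$.

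The main subtlety — mild, but worth flagging — is the ordering issue in (ii): the Schauder basis property is order-dependent, so one needs to verify that the natural inherited enumeration of $\bigcap \Lambda_n$ agrees with the partial-sum structure of each $\Lambda_n$. In part (i) no such issue arises, because the Riesz condition is purely set-theoretic. Otherwise, the proof is a clean one-line limit argument resting entirely on weak convergence applied to the continuous bounded functions $|p|^2$ arising from trigonometric polynomials.
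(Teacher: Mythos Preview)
Your proof is correct and rests on the same underlying idea as the paper's: weak convergence of $\mu_n$ to $\mu$ tested against trigonometric polynomials. For part (ii) your argument is essentially identical to the paper's. For part (i) you take a slightly more direct route: you apply weak convergence to the single bounded continuous function $|p|^2$ and pass to the limit in the Riesz inequality in one step. The paper instead works entrywise with the finite Gram matrices $M_n^K[i,j]=\langle e_{\lambda_i},e_{\lambda_j}\rangle_{L^2(\mu_n)}$, shows they converge entry by entry (equivalently, weak convergence against each $e_{\lambda_i-\lambda_j}$), and then uses a Frobenius-norm $\epsilon$-argument to transfer the operator inequalities $A_n^2 I\le M_n^K\le B_n^2 I$ to the limit. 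Your approach is cleaner and avoids the extra $\epsilon$ bookkeeping; the paper's has the minor virtue of making explicit that only the values $\widehat{\mu_n}(\lambda_i-\lambda_j)$ matter, which connects naturally to Proposition~\ref{prgr} later. One small notational slip: with the paper's convention the Riesz bounds $A_n,B_n$ apply to norms, not squared norms, so your displayed inequality should read $A_n^2\sum|c_k|^2\le\cdots\le B_n^2\sum|c_k|^2$; this does not affect the argument. Your flag about the ordering in (ii) is apt --- the paper handles it with the one-line remark that any subset of a Schauder basic sequence (in the induced order) is again Schauder basic with the same constant, which is exactly the compatibility you are pointing to.
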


\begin{proof} (i)
Consider a finite set of $K$ frequencies in $\cap \Lambda_{n}$,
say $\lambda_{1}, \dots , \lambda_{K}$.  For the measure
$\mu_{n}$, $\{e_{\lambda_{1}}, \dots, e_{\lambda_{K}} \} \subset
L^2(\mu_n)$ is a Riesz basic sequence with bounds between $A_n$
and $B_n$.  Thus, the Grammian matrix $M_{n}^{K}$ given by
\[ M_{n}^{K}[i,j] = \langle e_{\lambda_{i}}, e_{\lambda_{j}} \rangle_{L^2(\mu_{n})} \]
satisfies
\[ A_n^2 I \leq M_{n}^{K} \leq B_{n}^2. \]
Note that the sequence $M_{n}^{K}$ converges entry-wise to the
Grammian matrix $M^{K}$ given by
\[ M^{K}[i,j] = \langle e_{\lambda_{i}}, e_{\lambda_{j}} \rangle_{L^2(\mu)}. \]
Given $\epsilon > 0$, find $N$ such that the Frobenius norm
$\|M^{K} - M_{n}^{K} \|_{2} < \epsilon$ for $n > N$.  Thus, $M^{K}
- M_{n}^{K}$ is self-adjoint with norm less than $\epsilon$, hence
\[ -\epsilon I \leq M^{K} - M_{n}^{K} \leq \epsilon I . \]
Therefore,
\[
A_n^2I - \epsilon I \leq M_{n}^{K} + M^{K} - M_{n}^{K} \leq B_n^2
I + \epsilon I
\]
and taking limits, we get
\[ A^2 I - \epsilon I \leq M^{K} \leq B^2 I + \epsilon I. \]
This is true for any $\epsilon$. Thus $\{e_{\lambda_{1}}, \dots ,
e_{\lambda_{K}} \} \subset L^2(\mu)$ is a Riesz basic sequence
with bounds between $A$ and $B$.  Since
$\{\lambda_1,\dots,\lambda_K\}$ were arbitrary, this is true for
all of $\Lambda$.

(ii) Let $\| \cdot \|_{n}$ denote the norm in $L^2(\mu_{n})$ and
$\| \cdot \|$ denote the norm in $L^2(\mu)$.  Note that any subset
of $\Lambda_{n}$ is also a Schauder basic sequence in
$L^2(\mu_{n})$ with basis constant no greater than $C_n$.

Fix constants $\{a_{1}, \dots , a_{K}\}$ and $1 \leq k \leq K$,
and let $\{\lambda_{1}, \dots ,\lambda_{K} \}$ be a finite subset
of $\cap \Lambda_{n}$.  We have
\[ \left\| \sum_{j=1}^{k} a_{j} e_{\lambda_{j}} \right\|_{n} \leq C_{n} \left\| \sum_{j=1}^{K} a_{j} e_{\lambda_{j}} \right\|_{n} \]
Taking limits, we get
\[ \left\| \sum_{j=1}^{k} a_{j} e_{\lambda_{j}} \right\| \leq C \left\| \sum_{j=1}^{K} a_{j} e_{\lambda_{j}} \right\|. \]
\end{proof}

Next, we give some characterizations of Bessel spectra. One is in
terms of the Gram matrix (Proposition \ref{prgr}) and it applies
to general Borel measures. The other is in terms of the norm of
some finite matrices (Theorem \ref{thb1}) and applies to affine
IFS measures.

\begin{proposition}\label{prgr}
Let $\mu$ be a Borel probability measure on $\br^d$. Then a
discrete subset $\Lambda$ of $\br^d$ is a Bessel spectrum with
bound $M$ if and only if the matrix
$$(\widehat\mu(\lambda-\lambda'))_{\lambda,\lambda'\in\Lambda}$$
defines a bounded operator on $l^2(\Lambda)$ with norm less than
$M$.
\end{proposition}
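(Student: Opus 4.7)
The plan is to recognize the matrix $(\widehat\mu(\lambda-\lambda'))_{\lambda,\lambda'\in\Lambda}$ as the Gramian of the family $E(\Lambda)=\{e_\lambda:\lambda\in\Lambda\}$ in $L^2(\mu)$, and then to invoke the standard identification of Bessel bounds with operator norms of Gramians.

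First, I would pass to the equivalent formulation of the Bessel condition recorded right after the definition of Bessel sequence: $\Lambda$ is Bessel with bound $M$ if and only if
$$\left\|\sum_{k=1}^K c_k e_{\lambda_k}\right\|_{L^2(\mu)}^2 \le M\sum_{k=1}^K |c_k|^2$$
for every finite subset $\{\lambda_1,\dots,\lambda_K\}\subset\Lambda$ and every choice of scalars $c_k$. A direct computation from the definition of the Fourier transform gives
$$\langle e_\lambda, e_{\lambda'}\rangle_{L^2(\mu)}=\int e^{2\pi i(\lambda-\lambda')\cdot x}\,d\mu(x)=\widehat\mu(\lambda-\lambda'),$$
so, expanding the squared norm,
$$\left\|\sum_k c_k e_{\lambda_k}\right\|_{L^2(\mu)}^2=\sum_{k,k'}c_k\overline{c_{k'}}\,\widehat\mu(\lambda_k-\lambda_{k'})=\langle G_F c,c\rangle_{l^2},$$
where $G_F$ is the $K\times K$ principal submatrix of $G:=(\widehat\mu(\lambda-\lambda'))$ indexed by $\{\lambda_1,\dots,\lambda_K\}$.

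Next, I would observe that $G$ is Hermitian (since $\overline{\widehat\mu(\xi)}=\widehat\mu(-\xi)$ for the positive measure $\mu$), and that each finite principal submatrix $G_F$ is positive semi-definite as a genuine Gram matrix. The Bessel condition with bound $M$ therefore translates exactly into the statement that the quadratic form $c\mapsto\langle Gc,c\rangle$ is controlled by $M\|c\|_{l^2}^2$ on every finitely supported sequence $c\in l^2(\Lambda)$. Since finitely supported sequences are dense in $l^2(\Lambda)$ and $G$ is Hermitian, this uniform quadratic-form bound is in turn equivalent to $G$ extending (uniquely) to a bounded self-adjoint operator on $l^2(\Lambda)$ of operator norm at most $M$, which yields both directions of the claimed equivalence.

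There is no substantial obstacle. The only small point to handle carefully is the passage from a uniform bound on all finite principal submatrices to the existence of a bounded operator on the full space $l^2(\Lambda)$. This is a standard continuity argument: polarization recovers all matrix entries of $G$ from its quadratic form, and a positive Hermitian form that is bounded by $M\|c\|^2$ on a dense subspace extends uniquely to a bounded self-adjoint operator with norm at most $M$. Conversely, once $G$ is a bounded operator with norm at most $M$, the quadratic-form bound on every finite truncation is immediate, and running the chain of equivalences backwards recovers the Bessel inequality for arbitrary finite combinations of exponentials.
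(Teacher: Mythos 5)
Your proof is correct and follows essentially the same route as the paper: both rest on the observation that $\ip{e_\lambda}{e_{\lambda'}}_{L^2(\mu)}=\widehat\mu(\lambda-\lambda')$, so the matrix in question is exactly the Gram matrix of $E(\Lambda)$, and then on the standard equivalence between the Bessel bound and the norm of the Gram operator on $l^2(\Lambda)$. The only difference is that the paper simply cites this equivalence (Lemma 3.5.1 of \cite{Chr03}), whereas you reprove it via the quadratic-form bound on finitely supported sequences, Hermitian positivity, and density; that self-contained argument is sound.
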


\begin{proof}
By \cite[Lemma 3.5.1]{Chr03}, $(e_\lambda)_{\lambda\in\Lambda}$ is
a Bessel sequence with bound $M$ if and only if its Gram matrix
$$(\ip{e_\lambda}{e_{\lambda'}})_{\lambda,\lambda'\in\Lambda}$$
defines a bounded operator on $l^2(\Lambda)$ with norm less than
$M$. But
$\ip{e_\lambda}{e_\lambda'}=\widehat\mu(\lambda-\lambda')$ for all
$\lambda,\lambda'\in\Lambda$.
\end{proof}

Then, an application of Schur's lemma gives the following (see
\cite[Proposition 3.5.4]{Chr03}):
\begin{proposition}\label{prgr2}
Let $\mu$ be a Borel probability measure on $\br^d$. Let $\Lambda$
be discrete subset of $\br^d$. If there exists a constant $M>0$
such that
$$\sum_{\lambda'\in\Lambda}|\widehat\mu(\lambda-\lambda')|\leq M \mbox{ for all }\lambda\in\Lambda,$$
then $\Lambda$ is a Bessel spectrum with Bessel bound $M$.

\end{proposition}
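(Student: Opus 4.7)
The plan is to combine Proposition \ref{prgr} with the classical Schur test for boundedness of integral/matrix operators on $\ell^2$. By Proposition \ref{prgr}, it suffices to show that the Gram matrix $G := (\widehat\mu(\lambda-\lambda'))_{\lambda,\lambda'\in\Lambda}$ defines a bounded operator on $\ell^2(\Lambda)$ with operator norm at most $M$, since then $\{e_\lambda\}_{\lambda\in\Lambda}$ is automatically a Bessel sequence in $L^2(\mu)$ with bound $M$.

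The Schur test states that if an infinite matrix $(a_{\lambda,\lambda'})$ satisfies
\[
\sup_{\lambda}\sum_{\lambda'}|a_{\lambda,\lambda'}|\leq M_1,\qquad \sup_{\lambda'}\sum_{\lambda}|a_{\lambda,\lambda'}|\leq M_2,
\]
then it induces a bounded operator on $\ell^2(\Lambda)$ whose norm is at most $\sqrt{M_1 M_2}$. The hypothesis of the proposition is precisely the row-sum bound with $M_1=M$. For the column sums, the key observation is that since $\mu$ is a real (positive) measure, $\widehat\mu(-x)=\overline{\widehat\mu(x)}$, so
\[
|\widehat\mu(\lambda-\lambda')|=|\widehat\mu(\lambda'-\lambda)|.
\]
Hence the column sums equal the row sums, giving $M_2=M$ as well, and the Schur test yields the norm bound $\sqrt{M\cdot M}=M$.

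Putting this together: apply Proposition \ref{prgr} to reduce to a statement about the Gram matrix, apply Schur's test with the two bounds coming from the hypothesis and the Fourier conjugation symmetry, and conclude. There is no real obstacle here, as the result is essentially a direct citation of \cite[Proposition 3.5.4]{Chr03} once Proposition \ref{prgr} has been established; the only minor point to verify is the symmetry $|\widehat\mu(\lambda-\lambda')|=|\widehat\mu(\lambda'-\lambda)|$ needed to upgrade the one-sided hypothesis to the two-sided hypothesis of the Schur test.
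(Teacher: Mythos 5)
Your proof is correct and follows essentially the same route as the paper, which simply invokes Schur's lemma applied to the Gram matrix via Proposition \ref{prgr} (citing \cite[Proposition 3.5.4]{Chr03}); your only addition is to spell out the symmetry $|\widehat\mu(\lambda-\lambda')|=|\widehat\mu(\lambda'-\lambda)|$, which is indeed the correct way to get the column-sum bound, since $\mu$ is a positive measure so $\widehat\mu(-x)=\overline{\widehat\mu(x)}$.
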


 {\bf Notation.} For $k=(k_1,\dots,k_n)\in B^n$ we define
$$\tau_k:=\tau_{k_n}\circ\dots\circ\tau_{k_1}.$$
\begin{theorem}\label{thb1}
Assume the measure $\mu$ associated to the affine IFS
$(\tau_b)_{b\in B}$  has no overlap. Let $\Lambda$ be a discrete
subset of $\br^d$. Then the following assertions are equivalent:
\begin{enumerate}
\item The set $\Lambda$ is a Bessel spectrum for $\mu$. \item
There exist $r_0>0$ and $C>0$ with the following property: if for
all $n\in\bn$ we define
$$\Lambda_n(r_0):=\{\lambda\in\Lambda :  |\Rs^{-n}\lambda|\leq r_0\}$$
then the norm of the matrix
\begin{equation}
\label{eqb1} \frac{1}{\sqrt{N^n}}\left(e^{-2\pi
i\lambda\cdot\tau_k(0)}\right)_{\lambda\in\Lambda_n(r_0), k\in
B^n}
\end{equation}
is bounded by $C$. \item For all $r_0>0$ there exists a $C(r_0)>0$
such that for all $n\in\bn$ the norm of the matrix in \eqref{eqb1}
is bounded by $C(r_0)$.
\end{enumerate}
\end{theorem}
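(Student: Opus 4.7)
The plan is to reinterpret the matrix in~\eqref{eqb1}, denoted $A_n$, as a Gram-type matrix for the atomic measures $\nu_n:=\T^n\delta_0=\frac{1}{N^n}\sum_{k\in B^n}\delta_{\tau_k(0)}$. A direct computation gives
\[
(A_nA_n^*)_{\lambda,\lambda'}=\widehat{\nu_n}(\lambda'-\lambda),\qquad \lambda,\lambda'\in\Lambda_n(r_0),
\]
so by Proposition~\ref{prgr} applied to $\nu_n$, the number $\|A_n\|^2$ is exactly the Bessel bound of $\{e_\lambda:\lambda\in\Lambda_n(r_0)\}$ inside $L^2(\nu_n)$. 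Two facts are used throughout: by Hutchinson, $\nu_n\to\mu$ weakly, and since $R$ is expansive, $\Rs^{-n}\lambda\to 0$ for each fixed $\lambda\in\Lambda$, so every $\lambda$ eventually lies in $\Lambda_n(r_0)$.

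\emph{Easy directions.} The implication (3) $\Rightarrow$ (2) is immediate. For (2) $\Rightarrow$ (1), I fix a finite subset $\{\lambda_1,\dots,\lambda_K\}\subset\Lambda$ with coefficients $c_j\in\bc$. For all sufficiently large $n$, $\{\lambda_1,\dots,\lambda_K\}\subset\Lambda_n(r_0)$, so (2) gives $\int|\sum_jc_je_{\lambda_j}|^2\,d\nu_n\leq C^2\sum|c_j|^2$. The integrand is bounded and continuous and all the measures live on a common bounded set, so weak convergence $\nu_n\to\mu$ passes the inequality to $L^2(\mu)$. Arbitrariness of the finite subset then yields that $\Lambda$ is a Bessel spectrum for $\mu$ with bound $C^2$.

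\emph{The main step, (1) $\Rightarrow$ (3).} The no-overlap hypothesis is used to build cell-wise test functions. For $h\in L^2(\mu)$ and $c=(c_k)_{k\in B^n}$, set
\[
f:=\sum_{k\in B^n}c_k\,(h\circ\tau_k^{-1})\,\chi_{\tau_k(X_B)}\in L^2(\mu).
\]
No-overlap, together with the affine change of variables, yields $\|f\|^2=N^{-n}\|h\|^2\|c\|^2$ and
\[
\langle e_\lambda,f\rangle=\frac{1}{N^n}\sum_{k\in B^n}c_k\,e^{-2\pi i\lambda\cdot\tau_k(0)}\,H(\Rs^{-n}\lambda),\qquad H(\xi):=\int e^{-2\pi i\xi\cdot y}h(y)\,d\mu(y).
\]
Writing $M_0$ for the Bessel bound of $\Lambda$ in $L^2(\mu)$ and plugging the above into the Bessel inequality simplifies to
\[
\sum_{\lambda\in\Lambda}|H(\Rs^{-n}\lambda)|^2\,|(A_nc)_\lambda|^2\leq M_0\|h\|^2\|c\|^2.
\]
Now take $h=e_{\xi_j}$, so $\|h\|_{L^2(\mu)}=1$ and $H(\eta)=\widehat{\mu}(\xi_j-\eta)$. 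Choose $\rho>0$ with $|\widehat{\mu}(z)|\geq 1/2$ whenever $|z|\leq\rho$ (continuity and $\widehat{\mu}(0)=1$), and pick finitely many centers $\xi_1,\dots,\xi_{J(r_0)}\in\br^d$ so that $B(0,r_0)\subset\bigcup_jB(\xi_j,\rho)$. Summing the $J(r_0)$ resulting inequalities and restricting to $\lambda\in\Lambda_n(r_0)$: since then $\Rs^{-n}\lambda\in B(0,r_0)$, at least one term in $\sum_j|\widehat{\mu}(\xi_j-\Rs^{-n}\lambda)|^2$ is $\geq 1/4$, whence
\[
\tfrac14\|A_nc\|^2\leq J(r_0)\,M_0\,\|c\|^2
\]
uniformly in $n$.

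\emph{Main obstacle.} The difficult step is (1) $\Rightarrow$ (3). A single test function (for instance $h\equiv 1$, giving $H=\overline{\widehat{\mu}}$) only yields a useful lower bound on $|H|$ when $r_0$ is small enough that $|\widehat{\mu}|$ is bounded away from zero on $B(0,r_0)$; this proves~(2) but not~(3) in general. The crucial idea is to replace one test function by a finite family of modulated exponentials $e_{\xi_j}$ whose translated copies $\widehat{\mu}(\xi_j-\cdot)$ jointly avoid simultaneous vanishing on $B(0,r_0)$, converting the continuity of $\widehat{\mu}$ at the origin into a uniform lower bound valid for any prescribed $r_0$.
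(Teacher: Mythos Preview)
Your argument is correct, and it differs from the paper's in two places. For $(ii)\Rightarrow(i)$ the paper works directly in $L^2(\mu)$: it checks the Bessel inequality on the dense class of step functions $f=\sum_{k\in B^n}c_k\chi_{\tau_k(X)}$ using Lemma~\ref{lemb2} and the matrix bound, rather than passing through the weak convergence $\nu_n\to\mu$. Your route via Hutchinson is shorter and, incidentally, does not even use the no-overlap hypothesis for this implication.

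The more substantial divergence is in how you reach (iii). The paper does \emph{not} prove $(i)\Rightarrow(iii)$ directly; it first proves $(i)\Rightarrow(ii)$ exactly as you sketch in your ``main obstacle'' paragraph (namely with the single test function $h\equiv 1$ and a small $r_0$ on which $|\widehat\mu|\geq\delta$), and then gets $(ii)\Rightarrow(iii)$ by a separate combinatorial step: given any $r_1>0$, choose $m$ with $\Rs^{-m}B(0,r_1)\subset B(0,r_0)$, observe $\Lambda_n(r_1)\subset\Lambda_{n+m}(r_0)$ and (using $0\in B$) $\{\tau_k(0):k\in B^n\}\subset\{\tau_k(0):k\in B^{n+m}\}$, so that $A(n,r_1)$ is a submatrix of $A(n+m,r_0)$ up to the factor $\sqrt{N^m}$. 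Your covering device---summing the Bessel inequalities for the modulated test functions $h=e_{\xi_j}$ so that the shifted bumps $|\widehat\mu(\xi_j-\cdot)|^2$ jointly dominate a positive constant on $B(0,r_0)$---replaces that submatrix argument entirely and yields $(i)\Rightarrow(iii)$ in one stroke. The payoff is that your proof never invokes the assumption $0\in B$ that the paper's $(ii)\Rightarrow(iii)$ step uses; the paper's payoff is a slightly more transparent constant and the observation that the dependence of $C(r_0)$ on $r_0$ is governed purely by the contraction rate of $\Rs^{-1}$.
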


We begin with some lemmas.
\begin{lemma}\label{lemb1}\cite[Lemma2.2]{DHSW11}
Let $X=X_B$ be the attractor of the IFS $(\tau_b)_{b\in B}$. For
all $n\in\bn$ and $k\in B^n$
$$\int_{\tau_k(X)}f\,d\mu=\frac{1}{N^n}\int_X f\circ\tau_k\,d\mu,\quad(f\in L^\infty(X)).$$
\end{lemma}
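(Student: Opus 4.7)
My approach is induction on $n$. The base case $n=1$ uses the $\T$-invariance of $\mu$ together with the no-overlap hypothesis; the inductive step is a clean factorization reducing to the base case.

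For the base case, I apply the invariance identity $\int g\,d\mu = \frac{1}{N}\sum_{b'\in B}\int g\circ\tau_{b'}\,d\mu$ to the bounded function $g := f\cdot\chi_{\tau_b(X)}$, obtaining
\[
\int_{\tau_b(X)} f\,d\mu \;=\; \frac{1}{N}\sum_{b'\in B}\int_X f(\tau_{b'}(x))\,\chi_{\tau_b(X)}(\tau_{b'}(x))\,d\mu(x).
\]
For $b'=b$, $\chi_{\tau_b(X)}\circ\tau_b\equiv 1$ on $X$ (since $\tau_b(X)\subset\tau_b(X)$), producing the desired contribution $\frac{1}{N}\int f\circ\tau_b\,d\mu$. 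For $b'\neq b$, I need the set $E_{b'}:=\tau_{b'}^{-1}(\tau_b(X)\cap\tau_{b'}(X))\subset X$ to be $\mu$-null. This I would establish by applying $\T$-invariance one more time with $E:=\tau_b(X)\cap\tau_{b'}(X)$: the no-overlap hypothesis gives $\mu(E)=0$, and the convex decomposition $\mu(E)=\frac{1}{N}\sum_{b''\in B}\mu(\tau_{b''}^{-1}(E))$ then forces every summand, in particular $\mu(E_{b'})$, to vanish.

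For the inductive step, I factor $k=(k_1,k')$ with $k':=(k_2,\ldots,k_{n+1})\in B^n$, so that $\tau_k=\tau_{k'}\circ\tau_{k_1}$ and $\tau_k(X)=\tau_{k'}(\tau_{k_1}(X))\subset\tau_{k'}(X)$. Applying the inductive hypothesis at level $n$ to $g:=f\cdot\chi_{\tau_k(X)}\in L^\infty(X)$ yields
\[
\int_{\tau_k(X)}f\,d\mu \;=\; \int_{\tau_{k'}(X)}g\,d\mu \;=\; \frac{1}{N^n}\int_X g\circ\tau_{k'}\,d\mu.
\]
Injectivity of $\tau_{k'}$ on $\br^d$ gives $\tau_{k'}^{-1}(\tau_k(X))=\tau_{k_1}(X)$, so the integrand collapses to $(f\circ\tau_{k'})\cdot\chi_{\tau_{k_1}(X)}$. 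Invoking the base case now with $h:=f\circ\tau_{k'}$ yields
\[
\frac{1}{N^n}\int_{\tau_{k_1}(X)}h\,d\mu \;=\; \frac{1}{N^{n+1}}\int_X h\circ\tau_{k_1}\,d\mu \;=\; \frac{1}{N^{n+1}}\int_X f\circ\tau_k\,d\mu,
\]
which closes the induction.

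The only delicate step, and the one I expect to be the main obstacle, is the $b'\neq b$ case of the base step. The no-overlap hypothesis is a statement about $\mu(\tau_b(X)\cap\tau_{b'}(X))$, but what is actually needed is the $\mu$-measure of the preimage of this overlap under the contractive map $\tau_{b'}$, and naively $\mu$-null sets can inflate under contractive inverses. The key idea above is to exploit $\T$-invariance as a convex decomposition so that nullity of the overlap propagates back to each preimage without requiring any regularity of $\mu$ beyond its fixed-point property $\T\mu=\mu$.
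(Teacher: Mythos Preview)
The paper does not supply its own proof of this lemma; it simply cites \cite[Lemma~2.2]{DHSW11}. So there is nothing in-paper to compare your argument against.

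Your proof is correct. The induction is set up cleanly, and the one genuinely nontrivial point---that for $b'\neq b$ the preimage $\tau_{b'}^{-1}(\tau_b(X)\cap\tau_{b'}(X))$ is $\mu$-null---is handled properly: you use the measure-theoretic form of invariance, $\mu(E)=\frac{1}{N}\sum_{b''\in B}\mu(\tau_{b''}^{-1}(E))$, applied to the null overlap set $E=\tau_b(X)\cap\tau_{b'}(X)$, so that each nonnegative summand must vanish. This is exactly the right mechanism, and your remark that naive nullity of $E$ does not by itself control $\mu(\tau_{b'}^{-1}(E))$ is well taken. The inductive step is routine once the base case is in hand; your use of injectivity of $\tau_{k'}$ to rewrite $\chi_{\tau_k(X)}\circ\tau_{k'}=\chi_{\tau_{k_1}(X)}$ is correct since each $\tau_b$ is an affine bijection of $\br^d$. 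Note that the no-overlap hypothesis you invoke is not stated in the lemma itself but is in force in the surrounding context (Theorem~\ref{thb1}), and the lemma is indeed false without it.
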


\begin{lemma}\label{lemb2}
Let $f=\sum_{k\in  B^n}c_k\chi_{\tau_k(X)}$. Then
\begin{equation}\label{eqb2.1}
\ip{f}{e_\lambda}=\frac{1}{N^n}\widehat\mu(-\Rs^{-n}\lambda)\sum_{k\in
B^n}c_ke^{-2\pi i\lambda\cdot\tau_k(0)},
\end{equation}
\begin{equation}\label{eqb2.2}
\|f\|^2=\frac{1}{N^n}\sum_{k\in B^n}|c_k|^2.
\end{equation}
\end{lemma}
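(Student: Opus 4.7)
The plan is to reduce both identities to applications of Lemma \ref{lemb1}, exploiting the fact that the composition $\tau_k = \tau_{k_n}\circ\cdots\circ\tau_{k_1}$ of the affine maps $\tau_b(x) = R^{-1}(x+b)$ is itself affine, with linear part $R^{-n}$. Explicitly, iterating the formula for $\tau_b$ one checks that $\tau_k(y) = R^{-n}y + \tau_k(0)$ for every $y \in \br^d$, which is the key algebraic fact that lets us pull out a phase factor.

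For \eqref{eqb2.1}, I would expand $\ip{f}{e_\lambda} = \sum_{k\in B^n} c_k \int_{\tau_k(X)} e^{-2\pi i\lambda\cdot x}\,d\mu(x)$ and apply Lemma \ref{lemb1} to each integral, obtaining
\[
\int_{\tau_k(X)} e^{-2\pi i\lambda\cdot x}\,d\mu(x) = \frac{1}{N^n}\int_X e^{-2\pi i\lambda\cdot\tau_k(y)}\,d\mu(y).
\]
Substituting $\tau_k(y) = R^{-n}y + \tau_k(0)$ and using $\lambda\cdot R^{-n}y = (\Rs^{-n}\lambda)\cdot y$, the phase $e^{-2\pi i\lambda\cdot\tau_k(0)}$ factors out and the remaining integral is $\widehat\mu(-\Rs^{-n}\lambda)$ by the definition of the Fourier transform given in the introduction. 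Summing over $k \in B^n$ yields \eqref{eqb2.1}.

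For \eqref{eqb2.2}, I would write $\|f\|^2 = \sum_{k,k'\in B^n} c_k \overline{c_{k'}}\,\mu(\tau_k(X)\cap\tau_{k'}(X))$ and argue that the cross terms vanish. Lemma \ref{lemb1} with $f\equiv 1$ gives $\mu(\tau_k(X)) = 1/N^n$ for each $k\in B^n$. Since $X = \bigcup_{k\in B^n}\tau_k(X)$ (by iterating the self-similarity identity $X = \bigcup_{b\in B}\tau_b(X)$) and $\mu(X)=1$, summing the masses $1/N^n$ across the $N^n$ pieces already saturates the total measure; hence the $n$-th level pieces $\tau_k(X)$ are pairwise $\mu$-essentially disjoint. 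This is the iterated no-overlap condition, derived from the hypothesized one-step version, and it leaves only the diagonal terms, giving $\|f\|^2 = \frac{1}{N^n}\sum_k |c_k|^2$.

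The main obstacle is really just the careful bookkeeping: verifying the affine decomposition $\tau_k(y) = R^{-n}y + \tau_k(0)$ and the upgrade from one-step to $n$-step non-overlap. Both are routine but need to be stated explicitly so that the subsequent matrix norm analysis in Theorem \ref{thb1} can cleanly identify the operator in \eqref{eqb1} with the analysis operator of $\{e_\lambda\}$ acting on step functions supported on level-$n$ cylinders.
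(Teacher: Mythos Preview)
Your proposal is correct and follows essentially the same approach as the paper: expand the inner product, apply Lemma~\ref{lemb1}, use the affine decomposition $\tau_k(y)=R^{-n}y+\tau_k(0)$ to split off the phase, and recognize the remaining integral as $\widehat\mu(-\Rs^{-n}\lambda)$. For \eqref{eqb2.2} you actually give more detail than the paper (which just says ``a simple computation'' using no overlap and $\mu(\tau_k(X))=1/N^n$); your measure-counting argument for $n$-step essential disjointness is a clean way to fill that in.
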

\begin{proof}
We have, using Lemma \ref{lemb1}:
$$E:=\ip{f}{e_\lambda}=\sum_{k\in B^n}c_k\int_{\tau_k(X)}e^{-2\pi i\lambda\cdot x}\,d\mu(x)=
\sum_{k\in  B^n}c_k\frac{1}{N^n}\int_X e^{-2\pi i\lambda\cdot
\tau_k(x)}\,d\mu(x)$$ and since $\tau_k(x)=\tau_k(0)+R^{-n}x$:
$$E=\frac{1}{N^n}\sum_{k\in B^n}c_k e^{-2\pi i\lambda\cdot\tau_k(0)}\int_X e^{-2\pi i\lambda\cdot R^{-n}x}\,d\mu(x)$$
and  \eqref{eqb2.1} follows.

Equation \eqref{eqb2.2} can be obtained from a simple computation
(since $\mu$ has no overlap, the measure of $\tau_k(X)$ is
$1/N^n$).
\end{proof}

\begin{proof}[Proof of Theorem \ref{thb1}]
$(i)\Rightarrow(ii)$. Since $\widehat\mu$ is continuous and
$\widehat\mu(0)=1$, there exist $\delta>0$ and $r_0>0$ such that
$|\widehat\mu(x)|^2\geq\delta$ if $|x|\leq r_0$.

Using Lemma \ref{lemb2} and the Bessel inequality, we have for any
$n$ and any $f$ of the form $f=\sum_{k\in\bn}c_k\chi_{\tau_k(X)}$:
$$M\frac{1}{N^n}\sum_{k\in B^n}|c_k|^2=M\|f\|^2\geq \sum_{\lambda\in\Lambda_n(r_0)}|\ip{f}{e_\lambda}|^2=
\sum_{\lambda\in\Lambda_n(r_0)}\frac{1}{N^{2n}}|\widehat\mu(-\Rs^{-n}\lambda)|^2\left|\sum_{k\in
B^n}c_ke^{-2\pi i \lambda\cdot\tau_k(0)}\right|^2.$$ But, if
$\lambda\in\Lambda_n(r_0)$, then $|\Rs^{-n}\lambda|\leq r_0$ so
$|\widehat\mu(\Rs^{-n}\lambda)|^2\geq\delta$. Therefore
$$M\sum_{k\in B^n}|c_k|^2\geq\delta \sum_{\lambda\in\Lambda_n(r_0)}\left|\frac{1}{\sqrt{N^n}}\sum_{k\in B^n}c_ke^{-2\pi i\lambda\cdot\tau_k(0)}\right|^2$$
which shows that the norm of the matrix in \eqref{eqb1} is less
than $\sqrt{M/\delta}$.

$(ii)\Rightarrow(i)$ It is enough to prove the Bessel bound for
functions of the form $f=\sum_{k\in B^m}c_k\chi_{\tau_k(X)}$
because these are dense in $L^2(\mu)$.

To see that these functions are dense in $L^2(\mu)$, take first a
continuous function $f$ on $X=X_B$ and $\epsilon>0$. Since $X$ is
compact, the function $f$ is uniformly continuous. Take $m$ large
enough such that the diameter of all sets $\tau_k(X)$, $k\in B^m$,
is small enough so that $|f(x)-f(y)|<\epsilon$ for all
$x,y\in\tau_k(X)$ and all $k\in B^m$. Then, using the non-overlap,
define
$$g:=\sum_{k\in B^m}f(\tau_k(0))\chi_{\tau_k(X)}.$$
It is easy to see that $\sup_{x\in X}|f(x)-g(x)|\leq \epsilon$.
This proves that these step functions are dense in $C(X)$, and
since $\mu$ is a regular Borel measure, they are dense in
$L^2(\mu)$.

If $f$ is of this form and  $n\geq m$ then we can say $f$ is also
of the form $f=\sum_{k\in B^n} c_k\chi_{\tau_k(X)}$ because
$(\tau_k(X))_{k\in  B^n}$ is a refinement of the family of sets
$(\tau_k(X))_{k\in B^m}$.

So take $n\geq m$. We have, using Lemma \ref{lemb2},
$$\sum_{\lambda\in\Lambda_n(r_0)}|\ip{f}{e_\lambda}|^2=\frac{1}{N^{2n}}\sum_{\lambda\in\Lambda_n(r_0)}|\widehat\mu(-\Rs^{-n}\lambda)|^2\left|\sum_{k\in B^n}c_ke^{-2\pi i\lambda\cdot\tau_k(0)}\right|^2$$
and, since $|\widehat\mu|\leq 1$, and using the hypothesis, we get
further
$$\leq \frac{1}{N^{2n}}\sum_{\lambda\in\Lambda_n(r_0)}\left|\sum_{k\in B^n}c_ke^{-2\pi i\lambda\cdot\tau_k(0)}\right|^2
\leq C\frac{1}{N^n}\sum_{k\in B^n}|c_k|^2=C\|f\|^2.$$ Since $n$
was arbitrary, we obtain
$$\sum_{\lambda\in\Lambda}|\ip{f}{e_\lambda}|^2\leq C\|f\|^2.$$
This proves (i).

$(iii)\Rightarrow(ii)$ is clear.

$(ii)\Rightarrow(iii)$. Take $r_1>0$. Since $\Rs^{-1}$ is
contractive, there exists $m\in\bn$ with the property that
$\Rs^{-m}B(0,r_1)\subset B(0,r_0)$. Then, for all $n$ we have
$$\Lambda_{n}(r_1):=\{\lambda\in\Lambda : |\Rs^{-n}\lambda|\leq r_1\}\subset
\{\lambda\in\Lambda : |\Rs^{-(n+m)}\lambda|\leq r_0\}.$$ Also,
since $0\in B$, we have
$$\{\tau_k(0) : k\in  B^n\}\subset\{\tau_k(0) : k\in B^{n+m}\}.$$
Then the matrix
$$A(n,r_1):=\left(e^{-2\pi i\lambda\cdot\tau_k(0)}\right)_{\lambda\in\Lambda_n(r_1), k\in  B^n}$$
is a submatrix of the matrix
$$A(n+m,r_0):=\left(e^{-2\pi i\lambda\cdot\tau_k(0)}\right)_{\lambda\in\Lambda_{n+m}(r_0), k\in B^{n+m}}.$$
Therefore, for all $n$
$$\left\|\frac{1}{\sqrt{N^n}}A({n,r_1})\right\|=\sqrt{N^m}\left\|\frac{1}{\sqrt{N^{n+m}}}A({n,r_1})\right\|\leq\sqrt{N^m}\left\|\frac{1}{\sqrt{N^{n+m}}}A({n+m,r_0})\right\|\leq \sqrt{N^m}C(r_0).$$
This proves (iii).
\end{proof}

Conditions (ii) and (iii) in Theorem \ref{thb1} can be expressed
in terms of Bessel sequences for finite atomic measures. We
formulate this in the following lemma:

\begin{lemma}\label{prgrf}
Let $F$ be a finite subset of $\br^d$ and let
$\delta_F:=\frac{1}{\#F}\sum_{f\in F}\delta_f$, where $\delta_f$
is the Dirac measure at $f$. Let $G$ be a finite subset of
$\br^d$. Then $G$ is a Bessel spectrum for $\delta_F$ with bound
$M$ if and only if the matrix
$$\frac{1}{\sqrt{\#F}}\left(e^{2\pi i f\cdot g}\right)_{f\in F,g\in G}$$
has norm less than $\sqrt{M}$.
\end{lemma}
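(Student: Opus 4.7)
The plan is to identify $L^2(\delta_F)$ with $\bc^{\#F}$ (with the weighted inner product) and translate the Bessel condition directly into an operator-norm bound for the relevant matrix. The proof should be entirely a bookkeeping calculation, with no genuine obstacle.

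First I would set up the identification $\phi \in L^2(\delta_F) \longleftrightarrow (\phi(f))_{f\in F} \in \bc^{\#F}$, under which
$$\|\phi\|^2_{L^2(\delta_F)} \;=\; \frac{1}{\#F}\sum_{f\in F}|\phi(f)|^2,\qquad \ip{\phi}{e_g}_{L^2(\delta_F)} \;=\; \frac{1}{\#F}\sum_{f\in F}\phi(f)\,e^{-2\pi i g\cdot f}.$$
Then $G$ being a Bessel spectrum for $\delta_F$ with bound $M$ means that for every $c = (c_f)_{f\in F}\in\bc^{\#F}$,
$$\sum_{g\in G}\frac{1}{(\#F)^2}\Bigl|\sum_{f\in F}c_f\,e^{-2\pi i g\cdot f}\Bigr|^2 \;\leq\; M\cdot\frac{1}{\#F}\sum_{f\in F}|c_f|^2.$$

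Second, I would multiply through by $(\#F)^2$ and introduce the matrix $A=(e^{-2\pi i g\cdot f})_{g\in G,\,f\in F}$, so that the Bessel inequality becomes $\|Ac\|^2_{\ell^2(G)} \leq M\cdot \#F\,\|c\|^2_{\ell^2(F)}$ for all $c\in\bc^{\#F}$. This is exactly the statement that the operator norm of $\frac{1}{\sqrt{\#F}}A$ is at most $\sqrt{M}$, and equivalence runs in both directions since $c$ is arbitrary.

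Finally, I would observe that the matrix appearing in the lemma, namely $\frac{1}{\sqrt{\#F}}(e^{2\pi i f\cdot g})_{f\in F,\,g\in G}$, is the conjugate transpose of $\frac{1}{\sqrt{\#F}}A$, and so has the same operator norm. This closes the equivalence with the matrix norm bound $\sqrt{M}$ asserted in the statement. The only step requiring any care is keeping the factors of $\#F$ straight between the $L^2(\delta_F)$-norm and the $\ell^2(F)$-norm; once that bookkeeping is done, the lemma follows immediately.
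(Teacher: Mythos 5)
Your proof is correct. The bookkeeping is right: under the identification $L^2(\delta_F)\cong\bc^{\#F}$ the Bessel inequality $\sum_{g\in G}|\ip{\phi}{e_g}|^2\le M\|\phi\|^2$ becomes $\frac{1}{(\#F)^2}\|Ac\|^2\le \frac{M}{\#F}\|c\|^2$ with $A=(e^{-2\pi i g\cdot f})_{g\in G,f\in F}$, which is exactly $\|\frac{1}{\sqrt{\#F}}A\|\le\sqrt M$, and the lemma's matrix is the adjoint of $\frac{1}{\sqrt{\#F}}A$, so it has the same norm. The route differs from the paper's: the paper does not unwind the definition of the Bessel bound directly, but instead invokes its Proposition \ref{prgr} (the Gram-matrix characterization of Bessel spectra, resting on Christensen's Lemma 3.5.1), computes that the Gram matrix $(\widehat\delta_F(g-g'))_{g,g'\in G}$ equals $AA^*$ for the normalized matrix $A$ of the statement, and concludes from $\|AA^*\|=\|A\|^2$. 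So the paper's argument is a two-line reduction to an already-established result and emphasizes the Gram-matrix viewpoint (the same one used throughout Section 3), while yours is a self-contained, first-principles identification of the analysis operator on the finite-dimensional space $L^2(\delta_F)$; yours is more elementary and makes the normalization by $\#F$ transparent, the paper's is shorter given Proposition \ref{prgr} and fits the surrounding machinery. Both establish the stated equivalence.
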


\begin{proof}
Let $A$ be this matrix. Then, for all $g,g'\in G$:
$$(AA^*)_{g,g'}=\frac{1}{\# F}\sum_{f\in F}e^{2\pi i f\cdot(g-g')}=\widehat\delta_F(g-g').$$
So, using Proposition \ref{prgr} and the fact that
$\|A^*A\|=\|A\|^2$, the result follows.

\end{proof}

Therefore, Theorem \ref{thb1} can be reformulated as follows:

\begin{theorem}\label{thb1re}
Using the notations from Theorem \ref{thb1}, the following
assertions are equivalent:
\begin{enumerate}
\item The set $\Lambda$ is a Bessel spectrum for $\mu$. \item
There exist $r_0>0$ and $C>0$ with the property that for all $n$,
the set $\Lambda_n(r_0)$ is a Bessel sequence with bound $C$ for
the atomic measure $\delta_{ B^n}:=\frac{1}{N^n}\sum_{k\in
B^n}\delta_{\tau_k(0)}$. \item For all $r_0>0$ there exists a
constant $C(r_0)>0$ with the property that for all $n$, the set
$\Lambda_n(r_0)$ is a Bessel sequence with bound $C(r_0)$ for the
atomic measure $\delta_{ B^n}:=\frac{1}{N^n}\sum_{k\in
B^n}\delta_{\tau_k(0)}$.
\end{enumerate}
\end{theorem}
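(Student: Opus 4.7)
The plan is to derive Theorem \ref{thb1re} as a direct corollary of Theorem \ref{thb1}, using Lemma \ref{prgrf} as a dictionary to translate the matrix-norm bounds of \eqref{eqb1} into Bessel bounds for the atomic measures $\delta_{B^n}$. The idea is to apply Lemma \ref{prgrf} separately for each $n$, with $F=\{\tau_k(0):k\in B^n\}$ and $G=\Lambda_n(r_0)$.

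First I would verify that, because $\mu$ has no overlap, the points $\tau_k(0)$, $k\in B^n$, are pairwise distinct, so that $\#F=N^n$ and $\delta_{B^n}$ coincides with the measure $\delta_F$ of Lemma \ref{prgrf}. If instead $\tau_k(0)=\tau_{k'}(0)$ for some $k\ne k'$ in $B^n$, then the affine identity $\tau_k(x)=\tau_k(0)+R^{-n}x$ used in Lemma \ref{lemb2} would force $\tau_k(X)=\tau_{k'}(X)$, contradicting the iterated non-overlap relation $\mu(\tau_k(X)\cap\tau_{k'}(X))=0$ (which itself follows by inducting on $n$ using $\T^n\mu=\mu$ and the single-step non-overlap assumption).

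With this in place, Lemma \ref{prgrf} says $\Lambda_n(r_0)$ is a Bessel sequence with bound $M$ for $\delta_{B^n}$ iff the matrix
$$\frac{1}{\sqrt{N^n}}\bigl(e^{2\pi i\tau_k(0)\cdot\lambda}\bigr)_{k\in B^n,\,\lambda\in\Lambda_n(r_0)}$$
has operator norm at most $\sqrt{M}$. This matrix is the conjugate transpose of the matrix $\frac{1}{\sqrt{N^n}}(e^{-2\pi i\lambda\cdot\tau_k(0)})_{\lambda\in\Lambda_n(r_0),\,k\in B^n}$ appearing in \eqref{eqb1}, so the two have equal operator norms. Consequently, condition (ii) of the present theorem with uniform Bessel bound $C$ is equivalent to condition (ii) of Theorem \ref{thb1} with uniform matrix-norm bound $\sqrt{C}$, and likewise for condition (iii). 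The equivalences (i)$\Leftrightarrow$(ii)$\Leftrightarrow$(iii) then follow immediately from the corresponding equivalences of Theorem \ref{thb1}.

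No substantive obstacle arises: once one establishes the distinctness of the $\tau_k(0)$, the argument is a mechanical translation between matrix operator norms and Bessel bounds via the square-root relation supplied by Lemma \ref{prgrf}. The one place where care is needed is the index-matching and the tracking of the $1/\sqrt{N^n}$ normalisation, but both formulations agree on this normalisation, so the two statements really do become the same after taking conjugate transposes.
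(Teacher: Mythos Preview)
Your proposal is correct and follows exactly the approach the paper intends: the paper presents Theorem \ref{thb1re} as an immediate reformulation of Theorem \ref{thb1} via Lemma \ref{prgrf}, without giving any further details. Your added justification that the points $\tau_k(0)$ are pairwise distinct (so that $\#F=N^n$ and $\delta_{B^n}$ really is the $\delta_F$ of Lemma \ref{prgrf}) is a point the paper glosses over, and your argument for it via the iterated no-overlap condition is sound.
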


In the next theorem we will give a sufficient condition for a set
to be a frame spectra formulated again in terms of the finite
matrices in \eqref{eqb1}.
\begin{theorem}\label{thsufr}
Assume that the measure $\mu$ associated to the affine IFS
$(\tau_b)_{b\in B}$  has no overlap. Let $\Lambda$ be a discrete
subset of $\br^d$. Let $r_0>0$ and $\delta>0$ be such that
$|\widehat\mu(x)|\geq \delta$ for $|x|\leq r_0$. Define
$$\Lambda_n(r_0):=\{\lambda\in\Lambda : |\Rs^{-n}\lambda|\leq r_0\}$$
and define the matrix $A_n$
$$A_n:=\frac{1}{\sqrt{N^n}}\left(e^{-2\pi i\lambda\cdot\tau_k(0)}\right)_{\lambda\in\Lambda_n(r_0), k\in  B^n}.$$
Suppose there exist constants $m,M>0$ such that
$$m\|f\|^2\leq \|A_nf\|^2\leq M\|f\|^2$$
 for all $f\in \bc^{\# B^n}$ and all $n\in\bn$. Then $\Lambda$ is a frame spectrum for $\mu$.
\end{theorem}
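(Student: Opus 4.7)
The strategy is to follow closely the structure of the proof of Theorem \ref{thb1}, splitting the frame inequality into its two halves and handling each on the dense subspace of step functions $f = \sum_{k \in B^n} c_k \chi_{\tau_k(X)}$, then extending by density.

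First, the Bessel (upper) bound is essentially free: the hypothesis $\|A_n f\|^2 \leq M\|f\|^2$ for all $n$ and all $f \in \bc^{\#B^n}$ says exactly that condition (ii) of Theorem \ref{thb1} holds with constant $\sqrt{M}$, and hence $\Lambda$ is a Bessel spectrum for $\mu$ with some bound $B \leq M/\delta^2$ or similar (the exact constant can be read off from the implication $(ii)\Rightarrow(i)$ already established).

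Next, the lower frame bound on step functions. Fix $n$ and $f = \sum_{k \in B^n} c_k \chi_{\tau_k(X)}$. By Lemma \ref{lemb2},
\[
\sum_{\lambda \in \Lambda} |\ip{f}{e_\lambda}|^2 \;\geq\; \sum_{\lambda \in \Lambda_n(r_0)} |\ip{f}{e_\lambda}|^2 \;=\; \frac{1}{N^{2n}} \sum_{\lambda \in \Lambda_n(r_0)} |\widehat\mu(-\Rs^{-n}\lambda)|^2 \left|\sum_{k\in B^n} c_k e^{-2\pi i \lambda\cdot\tau_k(0)}\right|^2.
\]
For $\lambda\in\Lambda_n(r_0)$ we have $|\Rs^{-n}\lambda|\le r_0$, so $|\widehat\mu(-\Rs^{-n}\lambda)|^2\ge \delta^2$. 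The remaining sum equals $N^n\|A_n c\|^2$, which by hypothesis is at least $mN^n\|c\|^2$. Using again Lemma \ref{lemb2}, $\frac{1}{N^n}\|c\|^2 = \|f\|^2$, so we conclude
\[
\sum_{\lambda\in\Lambda}|\ip{f}{e_\lambda}|^2 \;\ge\; \delta^2 m \|f\|^2.
\]
This uniform bound on step functions of every level $n$ gives the lower frame bound on the dense subspace of all step functions (recall that a step function at level $m$ is also a step function at every level $n\ge m$ since $(\tau_k(X))_{k\in B^n}$ refines $(\tau_k(X))_{k\in B^m}$, and such functions are dense in $L^2(\mu)$ as shown in the proof of Theorem \ref{thb1}).

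Finally, one extends both inequalities to all of $L^2(\mu)$ by density. Here the Bessel bound from the first step does the work: for any $f\in L^2(\mu)$ and a sequence of step functions $f_j\to f$,
\[
\left|\Bigl(\sum_{\lambda}|\ip{f_j}{e_\lambda}|^2\Bigr)^{1/2} - \Bigl(\sum_\lambda|\ip{f}{e_\lambda}|^2\Bigr)^{1/2}\right| \;\le\; \Bigl(\sum_\lambda|\ip{f_j-f}{e_\lambda}|^2\Bigr)^{1/2} \;\le\; \sqrt{B}\,\|f_j-f\|,
\]
so $\sum_\lambda|\ip{f_j}{e_\lambda}|^2\to \sum_\lambda|\ip{f}{e_\lambda}|^2$ and $\|f_j\|^2\to\|f\|^2$, allowing the inequality $\sum_\lambda|\ip{f_j}{e_\lambda}|^2\ge \delta^2 m \|f_j\|^2$ to pass to the limit. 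The main technical point is therefore the density argument for the lower bound, which relies crucially on having the Bessel bound in hand; the rest is a direct bookkeeping exercise with Lemma \ref{lemb2} and the hypothesis $|\widehat\mu|\ge\delta$ on the ball of radius $r_0$.
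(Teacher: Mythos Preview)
Your proof is correct and follows essentially the same route as the paper: the upper bound is deferred to Theorem \ref{thb1}, and the lower bound is obtained on step functions via Lemma \ref{lemb2} together with the hypothesis $|\widehat\mu|\ge\delta$ on the ball of radius $r_0$, then extended by density. Your treatment is in fact more careful than the paper's on one point: the paper simply asserts that density of step functions finishes the argument, whereas you spell out that passing the lower inequality to the limit requires the Bessel bound to control $\sum_\lambda|\ip{f_j-f}{e_\lambda}|^2$. (A minor remark: the Bessel constant coming out of the $(ii)\Rightarrow(i)$ computation is just $M$, not $M/\delta^2$; the $\delta$ only enters in the reverse implication.)
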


\begin{proof}
The upper frame bound follows from Theorem \ref{thb1}. For the
lower frame bound we use the computations in the proof of Theorem
\ref{thb1}(i)$\Rightarrow$(ii).

We have for any $n$ and any $f$ of the form $f=\sum_{k\in
B^n}c_k\chi_{\tau_k(X)}$:

$$\sum_{\lambda\in\Lambda_n(r_0)}|\ip{f}{e_\lambda}|^2\geq\frac{1}{N^n}\delta \sum_{\lambda\in\Lambda_n(r_0)}\left|\frac{1}{\sqrt{N^n}}\sum_{k\in B^n}c_ke^{-2\pi i\lambda\cdot\tau_k(0)}\right|^2.$$
Using the hypothesis this is bigger than
$$\frac{\delta m}{N^n}\sum_{k\in B^n}|c_k|^2=m\delta\|f\|^2.$$
The density of the functions $f$ of this type proves that
$\Lambda$ is a frame.
\end{proof}

\begin{example}

Let $\tau_{0}(x) = \dfrac{x}{3}$ and $\tau_{1}(x) =
\dfrac{x+2}{3}$.  We consider the following sequence of closed
sets:
\[ \Omega_{0} = [0,1]; \qquad \Omega_{n+1} = \tau_{0}(\Omega_{n}) \cup \tau_{1}(\Omega_{n}). \]
Note that $\cap_{n} \Omega_{n} = C_{3}$, the middle third Cantor
set.  We also endow these sets with a measure $\nu_{n}$ where
$\nu_{n}$ is Lebesgue measure restricted to $\Omega_{n}$,
normalized (by $(3/2)^n$) so that $\| \nu_{n} \| = 1$. As in the
proof of Corollary \ref{cor3}, we have $\T^n\nu_0=\nu_n$.
Therefore, this sequence of measures converges weakly to the
measure $\mu_{3}$.  (See e.g. \cite{Hut81})

We consider the IFS
\begin{equation} \label{Eq:dIFS}
 \rho_{0}(x) = 3x; \qquad \rho_{1}(x) = 3x + 1
\end{equation}
acting on $\bz$.  We also consider the sequence of spectra given
as:
\[ \Gamma_{0} = \bz; \qquad \Gamma_{n+1} = \rho_{0}(\Gamma_{n}) \cup \rho_{1}(\Gamma_{n}). \]

\begin{proposition}
The sequence $\Gamma_{n}$ is a Riesz basic spectrum for the
measure $\nu_{n}$, with basis bounds $A_{n}$, $B_{n}$ which
satisfy
\[ A^{n} \leq A_{n} \leq B_{n} \leq B^{n}, \]
where $A=\sqrt{\frac12}$ and $B=\sqrt{\frac32}$ are the basis bounds for the columns of the
matrix
\[ \dfrac{1}{\sqrt{2}} \begin{pmatrix} 1 & 1 \\ 1 & e^{4 \pi i /3} \end{pmatrix}. \]
Moreover, the Riesz basic sequence $\Gamma_{n}$ is complete in
$L^2(\nu_{n})$.
\end{proposition}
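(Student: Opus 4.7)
The plan is induction on $n$. The base case $n=0$ is immediate: $\Gamma_0 = \bz$ is the standard Fourier orthonormal basis of $L^2(\nu_0) = L^2[0,1]$, so $A_0 = B_0 = 1 = A^0 = B^0$, and $\Gamma_0$ is complete.

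For the inductive step, fix a finitely supported sequence $(c_\gamma)_{\gamma \in \Gamma_{n+1}}$ and set $f = \sum_\gamma c_\gamma e_\gamma$. Using the non-overlapping decomposition $\Omega_{n+1} = \bigcup_{b \in B} \tau_b(\Omega_n)$ with $B = \{0,2\}$, together with $\nu_{n+1} = \T\nu_n$, one has
\[
\|f\|^2_{L^2(\nu_{n+1})} = \tfrac{1}{2}\sum_{b \in B}\int_{\Omega_n}|f\circ\tau_b(y)|^2\,d\nu_n(y).
\]
Splitting $\Gamma_{n+1} = 3\Gamma_n \cup (3\Gamma_n+1)$ disjointly (valid since $\Gamma_n\subset\bz$) and computing $e_{3\gamma'+l}(\tau_b(y)) = e^{2\pi i\gamma' b}\,e_{\gamma'}(y)\,e^{2\pi i lb/3}\,e^{2\pi i ly/3}$ with $e^{2\pi i\gamma' b}=1$, a direct rearrangement gives
\[
f\circ\tau_b(y) = \sum_{l\in\{0,1\}} H_{b,l}\,e^{2\pi i ly/3}\,g_l(y),\qquad g_l(y):=\sum_{\gamma'\in\Gamma_n}c_{3\gamma'+l}\,e_{\gamma'}(y),
\]
where $H_{b,l}:=e^{2\pi ibl/3}$, i.e., $H = \sqrt{2}\,M$ with $M$ the matrix in the statement. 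For each fixed $y$ the modulations $e^{2\pi i ly/3}$ are unimodular, so the vector $z(y) := (e^{2\pi i ly/3}g_l(y))_l$ has $\|z(y)\|^2 = |g_0(y)|^2+|g_1(y)|^2$, and the singular values of $H$ (namely $1$ and $\sqrt{3}$, equivalently those of $M$ are $A$ and $B$) give $\|z(y)\|^2 \leq \sum_b|f\circ\tau_b(y)|^2 \leq 3\|z(y)\|^2$ pointwise. Integrating and combining with the inductive Riesz bounds for $g_0,g_1\in L^2(\nu_n)$ yields
\[
\tfrac{A_n^2}{2}\sum_\gamma|c_\gamma|^2 \;\leq\; \|f\|^2_{L^2(\nu_{n+1})} \;\leq\; \tfrac{3B_n^2}{2}\sum_\gamma|c_\gamma|^2,
\]
so $A_{n+1}\geq A\cdot A_n$ and $B_{n+1}\leq B\cdot B_n$, which iterates to the claimed $A^n\leq A_n\leq B_n\leq B^n$.

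Completeness of $\Gamma_{n+1}$ in $L^2(\nu_{n+1})$ follows by inverting the above decomposition: for arbitrary $f\in L^2(\nu_{n+1})$ set $F_b := f\circ\tau_b \in L^2(\nu_n)$, and since $H$ is invertible solve pointwise
\[
g_l(y) = e^{-2\pi i ly/3}\sum_{b\in B}(H^{-1})_{l,b}F_b(y) \;\in\; L^2(\nu_n).
\]
By the inductive hypothesis each $g_l$ is approximable by finite $\Gamma_n$-exponential sums, and re-assembly through the forward formula produces an approximation of $f$ by $\Gamma_{n+1}$-exponentials. The principal technical point is bookkeeping the modulation factor $e^{2\pi i ly/3}$ and confirming it is absorbed into the $L^2$-norms (which it is, being unimodular), so that all quantitative content of the induction reduces cleanly to the singular values of the single $2\times 2$ matrix $M$.
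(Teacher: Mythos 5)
Your proof is correct, and for the Riesz bounds it is essentially the paper's computation in different clothing: the paper splits $\Omega_n$ into $\tau_0(\Omega_{n-1})\cup\tau_1(\Omega_{n-1})$ and uses the $2/3$-periodicity of the two frequency blocks to fold the second piece onto the first, while you pull everything back through $\nu_{n+1}=\T\nu_n$ and the maps $\tau_b$; either way one applies the same fixed matrix $H=\bigl(e^{2\pi i bl/3}\bigr)_{b\in\{0,2\},\,l\in\{0,1\}}=\sqrt{2}\,M$ pointwise to the vector of modulated block sums, invokes its singular values $1,\sqrt3$, and feeds in the inductive bounds to get $A_{n+1}\ge A\,A_n$, $B_{n+1}\le B\,B_n$. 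Where you genuinely diverge is completeness: the paper argues by induction that $\mathrm{span}\,E(\Gamma_n)$ is \emph{uniformly} dense in $C(\Omega_n)$ (building $k_1,k_2$ from $g(\cdot)$ and $g(\cdot+2/3)$ via the inverse matrix, approximating their rescalings by $\Gamma_{n-1}$-polynomials, and reassembling), and only then passes to $L^2(\nu_n)$ by regularity of the measure; you work directly in $L^2$, pulling an arbitrary $f\in L^2(\nu_{n+1})$ back to $F_b=f\circ\tau_b\in L^2(\nu_n)$, inverting $H$ pointwise to get $g_l\in L^2(\nu_n)$, and approximating those by the inductive hypothesis. Your route avoids continuous functions and uniform estimates altogether, which is a genuine simplification; the one step worth recording explicitly is the reassembly estimate: if $p_l$ are the approximating $\Gamma_n$-polynomials and $q=\sum_l\sum_{\gamma'}c^{(l)}_{\gamma'}e_{3\gamma'+l}\in\mathrm{span}\,E(\Gamma_{n+1})$, then $\|f-q\|^2_{L^2(\nu_{n+1})}=\tfrac12\sum_{b}\int_{\Omega_n}\bigl|\sum_l H_{b,l}e^{2\pi i l y/3}\,(g_l-p_l)(y)\bigr|^2\,d\nu_n(y)\le\tfrac32\sum_l\|g_l-p_l\|^2_{L^2(\nu_n)}$, which is the quantitative meaning of the modulations being ``absorbed''; note also that both your identity $\|f\|^2_{\nu_{n+1}}=\tfrac12\sum_b\|f\circ\tau_b\|^2_{\nu_n}$ and the paper's domain splitting use the no-overlap of $\tau_0(\Omega_n)$ and $\tau_1(\Omega_n)$ at exactly the same spot.
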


\begin{proof}
The basis bounds $A$ and $B$ can be computed by taking the roots of the largest and smallest eigenvalues of $U^*U$ where $U$ is the given matrix.

Suppose $\Gamma_{n-1}$ is a Riesz basic spectrum for $\nu_{n-1}$
with bounds $C$ and $D$.  Let $\{a_{k}\}_{k \in \Gamma_{n}}$ be a
compactly supported sequence.
\begin{align*}
\int_{\Omega_{n}} | \sum_{k \in \Gamma_{n}} a_k e_{k}(x) |^2 d
\nu_{n} &=
  \int_{\tau_{0}(\Omega_{n-1})} | \sum_{k \in \Gamma_{n}} a_k e_{k}(x) |^2 d \nu_{n} +
  \int_{\tau_{1}(\Omega_{n-1})} | \sum_{k \in \Gamma_{n}} a_k e_{k}(x) |^2 d \nu_{n} \\
  &= \int_{\tau_{0}(\Omega_{n-1})} | \sum_{k \in \rho_{0}(\Gamma_{n-1})} a_k e_{k}(x) + \sum_{k \in \rho_{1}(\Gamma_{n-1})} a_k e_{k}(x) |^2 d \nu_{n} \\
  & \qquad + \int_{\tau_{1}(\Omega_{n-1})} | \sum_{k \in \rho_{0}(\Gamma_{n-1})} a_k e_{k}(x) + \sum_{k \in \rho_{1}(\Gamma_{n-1})} a_k e_{k}(x)|^2 d \nu_{n}.
\end{align*}
Let $f(x) = \sum_{k \in \rho_{0}(\Gamma_{n-1})} a_k e_{k}(x)$ and
$g(x) = \sum_{k \in \rho_{1}(\Gamma_{n-1})} a_k e_{k-1}(x)$ so
that both $f(x)$ and $g(x)$ are finite linear combinations of $\{
e_{k} : k \in \rho_{0}(\Gamma_{n-1}) \}$.  Since $\Gamma_{n-1}
\subset \bz$, we have $\rho_{0}(\Gamma_{n-1}) \subset 3 \bz$, so
$f(x + 2/3) = f(x)$ and $g(x + 2/3) = g(x)$.  Continuing with the
above computation:
\begin{align*}
\int_{\Omega_{n}} | \sum_{k \in \Gamma_{n}} a_k e_{k}(x) |^2 d
\nu_{n}
  &= \int_{\tau_{0}(\Omega_{n-1})} | f(x) + e_1(x) g(x) |^2 d \nu_{n} + \int_{\tau_{1}(\Omega_{n-1})} | f(x) + e_1(x)g(x) |^2 d \nu_{n} \\
  &= \int_{\tau_{0}(\Omega_{n-1})} | f(x) + e_1(x) g(x) |^2 + | f(x + \dfrac{2}{3}) + e_1(x + \dfrac{2}{3})g(x + \dfrac{2}{3}) |^2 d \nu_{n} \\
  &= \int_{\tau_{0}(\Omega_{n-1})} | f(x) + e_1(x) g(x) |^2 + | f(x) + e_1(x + \dfrac{2}{3})g(x) |^2 d \nu_{n} \\
  &= \int_{\tau_{0}(\Omega_{n-1})} \biggm\| \begin{pmatrix} 1 & 1 \\ 1 & e_{1}(\frac{2}{3}) \end{pmatrix} \begin{pmatrix} f(x) \\ e_{1}(x) g(x) \end{pmatrix} \biggm\|^2 d \nu_{n}.
\end{align*}
By hypothesis, we have that
\[ 2 A^2( |f(x)|^2 + |e_1(x)g(x)|^2 ) \leq \biggm\| \begin{pmatrix} 1 & 1 \\ 1 & e_{1}(\frac{2}{3}) \end{pmatrix} \begin{pmatrix} f(x) \\ e_{1}(x) g(x) \end{pmatrix} \biggm\|^2 \leq 2 B^2 ( |f(x)|^2 + |e_1(x)g(x)|^2). \]
Therefore,
\begin{equation} \label{E:ab1}
2 A^2( \int_{\tau_{0}(\Omega_{n-1})} |f(x)|^2 + |g(x)|^2  d
\nu_{n}) \leq \int_{\Omega_{n}} | \sum_{k \in \Gamma_{n}} a_k
e_{k}(x) |^2 d \nu_{n} \leq 2 B^2 ( \int_{\tau_{0}(\Omega_{n-1})}
|f(x)|^2 + |g(x)|^2  d \nu_{n}).
\end{equation}
For $f(x) = \sum_{k \in \rho_{0}(\Gamma_{n-1})} a_{k} e_k(x) =
\sum_{k \in \Gamma_{n-1}} a_{\rho_{0}(k)} e_k(x)$, we have
\begin{align}
\int_{\tau_{0}(\Omega_{n-1})} |\sum_{k \in \rho_{0}(\Gamma_{n-1})} a_{k} e_k(x)|^2  d \nu_{n} &= \int_{\tau_{0}(\Omega_{n-1})} |\sum_{k \in \Gamma_{n-1}} a_{\rho_{0}(k)} e_k(\rho_{0}(x))|^2  d \nu_{n} \notag \\
&= \dfrac{1}{2} \int_{\Omega_{n-1}} |\sum_{k \in \Gamma_{n-1}}
a_{\rho_{0}(k)} e_k(x) |^2 d \nu_{n-1}. \label{E:gn1}
\end{align}
Since $\Gamma_{n-1}$ is a Riesz basic spectrum for $\nu_{n-1}$, we
have
\begin{equation}
A_{n-1}^{2} \left( \sum_{k \in \Gamma_{n-1}} |a_{\rho_{0}(k)}|^2
\right) \leq  \int_{\Omega_{n-1}} | f |^2 d\nu_{n-1}  \leq
B_{n-1}^{2} \left( \sum_{k \in \Gamma_{n-1}} |a_{\rho_{0}(k)}|^2
\right) \label{E:abn1}
\end{equation}
Likewise, we have
\begin{equation*}
A_{n-1}^{2} \left( \sum_{k \in \Gamma_{n-1}} |a_{\rho_{1}(k)}|^2
\right) \leq   \int_{\Omega_{n-1}} | g |^2 d\nu_{n-1}  \leq
B_{n-1}^{2} \left( \sum_{k \in \Gamma_{n-1}} |a_{\rho_{1}(k)}|^2
\right)
\end{equation*}
Combining (\ref{E:ab1}), (\ref{E:gn1}), and (\ref{E:abn1}), we
obtain
\begin{align*}
A_{n-1}^2 A^2 ( \sum_{k \in \Gamma_{n}} |a_{k}|^2 ) &= A_{n-1}^2 A^2 ( \sum_{k \in \Gamma_{n-1}} |a_{\rho_{0}(k)}|^2  + \sum_{k \in \Gamma_{n-1}} |a_{\rho_{1}(k)}|^2 ) \\
& \leq A^2 ( \int_{\Omega_{n-1}} |f|^2 + |g|^2 d \nu_{n-1}) \\
& \leq \int_{\Omega_{n}} | \sum_{k \in \Gamma_{n}} a_{k} e_{k}(x) |^2 d \nu_{n} \\
& \leq B^2 ( \int_{\Omega_{n-1}} |f|^2 + |g|^2 d \nu_{n-1}) \\
& \leq B_{n-1} B^2 ( \sum_{k \in \Gamma_{n}} |a_{k}|^2 ).
\end{align*}
This completes the proof that $\Gamma_{n}$ is a Riesz basic
spectrum for $\nu_{n}$.

We now need to prove that $\Gamma_{n}$ is complete in
$L^2(\nu_{n})$.  Again we proceed by induction, and we will show
that $\Gamma_{n}$ has uniformly dense span in $C(\Omega_{n})$.
Let $g \in C(\Omega_{n})$, and define the functions
\[ k_{1}(x) = \alpha g(x) + \beta g(x + 2/3); \qquad k_{2}(x) = e^{-2 \pi i x} (\gamma g(x) + \delta g(x + 2/3)), \]
where
\[ \begin{pmatrix} \alpha & \beta \\ \gamma & \delta \end{pmatrix} = \begin{pmatrix} 1 & 1 \\ 1 & e^{4 \pi i /3} \end{pmatrix}^{-1} =: A^{-1}. \]
Note that $k_1,k_2 \in C(\Omega_{n})$, so for $x \in \Omega_{n-1}$
the functions defined as $h_1(x) := k_{1}(x/3)$ and $h_2(x) :=
k_{2}(x/3)$ are both elements of $C(\Omega_{n-1})$.  By
hypothesis, there exists $l_{1},l_{2} \in span({\Gamma_{n-1}})$
such that for all $x \in \Omega_{n-1}$, $|h_1(x) - l_{1}(x)| <
\epsilon$ and $|h_{2}(x) - l_{2}(x)| < \epsilon$.  Therefore
$l_{1}(3x) \in span(\rho_{0}(\Gamma_{n-1}))$ and $e^{2 \pi i x}
l_{2}(3x) \in span(\rho_{1}(\Gamma_{n-1}))$.  Note that for $q(x)
\in span(\rho_{0}(\Gamma_{n-1}))$, $q(x + 2/3) = q(x)$, and for
$q(x) \in span(\rho_{1}(\Gamma_{n-1}))$, $q(x+2/3) = e^{4 \pi i
/3} q(x)$.

We consider $f(x) = l_{1}(3x) + e^{2\pi i x} l_{2}(3x) \in
span(\Gamma_{n})$.  For any $x \in \tau_{0}(\Omega_{n-1})$,
\begin{align*}
|g(x) - f(x)| + |g(x + 2/3) - f(x + 2/3)| &= \left\| \begin{pmatrix} g(x) \\ g(x + 2/3) \end{pmatrix} - \begin{pmatrix} l_{1}(3x) + e^{2 \pi i x} l_{2}(3x)  \\ l_{1}(3x) + e^{4 \pi i /3} e^{2 \pi i x} l_{2}(3x) \end{pmatrix} \right\|_{1} \\
&= \left\| \begin{pmatrix} g(x) \\ g(x + 2/3) \end{pmatrix} - A \begin{pmatrix} l_{1}(3x) \\ e^{2 \pi i x} l_{2}(3x) \end{pmatrix} \right\|_{1} \\
&= \left\| A \left( A^{-1} \begin{pmatrix} g(x) \\ g(x + 2/3) \end{pmatrix} - \begin{pmatrix} l_{1}(3x) \\ e^{2 \pi i x} l_{2}(3x) \end{pmatrix} \right) \right\|_{1} \\
&\leq \|A\|_{1} \left\| \left( \begin{pmatrix} k_{1}(x) \\ e^{2 \pi i x} k_{2}(x) \end{pmatrix} - \begin{pmatrix} l_{1}(3x) \\ e^{2 \pi i x} l_{2}(3x) \end{pmatrix} \right) \right\|_{1} \\
&= \|A\|_{1} \left\| \left( \begin{pmatrix} h_{1}(3x) \\ e^{2 \pi i x} h_{2}(3x) \end{pmatrix} - \begin{pmatrix} l_{1}(3x) \\ e^{2 \pi i x} l_{2}(3x) \end{pmatrix} \right) \right\|_{1} \\
&= \|A\|_{1} \left( |h_{1}(3x) - l_{1}(3x)| + |h_{2}(3x) - l_{2}(3x)| \right) \\
&< 2 \|A \|_{1} \epsilon.
\end{align*}
Therefore, $g$ is in the uniform closure of $span(\Gamma_{n})$.
\end{proof}
\end{example}

\begin{remark}
Note that $\cap_{n} \Gamma_{n} = \{0,1,3,4,\dots\} = \{
\sum_{k=1}^{K} l_{k} 3^k : l_k = 0,1 \}$.  By \cite{DHSW11}, this
system cannot be a Bessel spectrum for $\mu_{3}$, and so $B_{n}$
diverges to $\infty$.  A natural question follows: is $\cap_{n}
\Gamma_{n}$ a Schauder basic spectrum for $\mu_{3}$?
\end{remark}

\begin{remark}
Note that $\Gamma_{n}$ are determined by the ``dual'' iterated function
system given by $\rho_{0}$ and $\rho_{1}$ in Equation \ref{Eq:dIFS}. Indeed,
\[ \Gamma_{n} = \cup_{k=1}^{2^n} 3^n \bz + q_{k} \]
where $q_{k} \in \{0, 1 , \dots , 3^n - 1\}$, and correspond to the
orbit of $0$ under words of $\rho_{0}$, $\rho_{1}$ of length $n$.

Consider now the possibility of freely choosing the $q_{k}$'s at
each scale $n$:  for each $n$, choose $Q_{n} \subset \{0, 1 , \dots
, 3^n - 1\}$ of size $r_{n}$ (not necessarily $2^n$) and let
\[ \Gamma_{n} = \cup_{k=1}^{r_{n}} 3^n \bz + q_{k}. \]

Let $P_n=\{p_1<p_2<\dots<p_{2^n}\}$ be the integers that appear as left endpoints for the intervals of the set $3^n\Omega_n$. The set $P_n$ can be defined recursively as a set of integers
$P_{n} \subset \{0, 1, \dots , 3^n-1\}$ by $P_{n}
= \cup_{p \in P_{n-1}} \{ p, p + 2 \cdot 3^n \}$, $P_{1} = \{0,
2\}$.
\end{remark}

\begin{theorem}
Suppose $Q_{n}$ is such that the $2^n \times r_{n}$ matrix
\[ M(l,m) = e^{2 \pi i q_{l} p_{m} / 3^n} \]
is bounded in norm by $2^{\frac{n}{2}} L$, for some $L$ independent of $n$.
(For convenience, suppose $p_{k} < p_{k+1}$ and $q_{k} <
q_{k+1}$.)  Then
\[ \Gamma_{n} = \cup_{k=1}^{r_n} 3^n \bz + q_{k} \]
is a Bessel spectrum for $\nu_{n}$ with Bessel bound no greater
than $L^2$ and hence $\cap \Gamma_{n}$ is a Bessel spectrum for
$\mu_{3}$ with Bessel bound no greater than $L^2$.

If in addition, the matrix $M$ is bounded below by $2^{\frac{n}{2}} L'$, for
some $L'$ independent of $n$, then $\Gamma_{n}$ is a Riesz basic
spectrum for $\nu_{n}$ with lower basis bound $L'$, and hence
$\cap \Gamma_{n}$ is a Riesz basic spectrum for $\mu_{3}$.
\end{theorem}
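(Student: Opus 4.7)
The plan is to compute $\|\sum_{\gamma\in\Gamma_n} a_\gamma e_\gamma\|^2_{L^2(\nu_n)}$ directly for a finitely supported sequence $(a_\gamma)$ and compare it with $\|a\|^2_{\ell^2(\Gamma_n)}$, then deduce the Bessel and Riesz bounds from upper and lower bounds on the synthesis operator. The passage from $\nu_n$ to $\mu_3$ uses the weak convergence $\nu_n\to\mu_3$: Proposition \ref{pr1.14}(i) handles the Riesz case directly, and a parallel Gram-matrix continuity argument gives the Bessel statement.

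The main calculation starts by parametrizing $\gamma = 3^n z + q_l$ with $z\in\bz$, $l\in\{1,\dots,r_n\}$, and decomposing
\[
h(x) := \sum_{\gamma\in\Gamma_n} a_\gamma e_\gamma(x) = \sum_l e^{2\pi iq_l x}\,F_l(x), \qquad F_l(x) := \sum_z a_{l,z}\, e^{2\pi i\cdot 3^n z x},
\]
so each $F_l$ has period $3^{-n}$. On the interval $I_m = [p_m/3^n, (p_m+1)/3^n]$, the substitution $x = (p_m+s)/3^n$ with $s \in [0,1]$, together with the integrality of $p_m$ (which gives $F_l(s/3^n + p_m/3^n) = F_l(s/3^n)$ by periodicity) and the identity $e^{2\pi iq_l p_m/3^n} = M(l,m)$, yields
\[
h\bigl((p_m+s)/3^n\bigr) = \sum_l M(l,m)\, G_l(s), \qquad G_l(s) := e^{2\pi iq_l s/3^n}\, F_l(s/3^n).
\]
Integrating $|h|^2$ over each $I_m$ (with $dx = 3^{-n}\,ds$) and weighting by $(3/2)^n$ produces $\|h\|^2_{L^2(\nu_n)} = 2^{-n}\int_0^1 \|M^T G(s)\|^2\,ds$, while Parseval for the Fourier series $F_l$ on $[0,3^{-n}]$ combined with $|G_l(s)| = |F_l(s/3^n)|$ gives $\|a\|^2_{\ell^2(\Gamma_n)} = \int_0^1 \|G(s)\|^2\,ds$.

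Applying the matrix bounds pointwise in $s$ then yields $\|h\|^2 \leq 2^{-n}\cdot 2^n L^2\|a\|^2 = L^2\|a\|^2$ (Bessel bound $L^2$) and, under the lower bound hypothesis, $\|h\|^2 \geq L'^2\|a\|^2$ (Riesz lower basis bound $L'$). Proposition \ref{pr1.14}(i) transfers the Riesz property to $\cap_n\Gamma_n$ on $\mu_3$. For the Bessel conclusion, for any finite $F\subset\cap_n\Gamma_n$ the finite Gram matrices $\bigl(\widehat{\nu_n}(\lambda-\lambda')\bigr)_{\lambda,\lambda'\in F}$ have operator norm at most $L^2$; by weak convergence they converge entry-wise to $\bigl(\widehat{\mu_3}(\lambda-\lambda')\bigr)_{\lambda,\lambda'\in F}$, and since the operator norm is continuous on finite-dimensional matrices, the limit also has norm at most $L^2$, giving the Bessel bound $L^2$ via Proposition \ref{prgr}.

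The main obstacle is conceptual rather than computational: recognizing that the integrality of the endpoints $p_m$ is precisely what forces the periodic pieces $F_l(\cdot/3^n)$ to take the same values on every interval of $\Omega_n$, so that the $L^2(\nu_n)$ inner-product structure factors cleanly through the single finite matrix $M$, even though $\nu_n$ is not an IFS-invariant measure and so Theorem \ref{thb1} does not directly apply. A secondary point is interpreting the lower bound hypothesis correctly for the Riesz direction: one needs the pointwise estimate $\|M^T v\| \geq 2^{n/2} L'\|v\|$, which forces $M^T$ to be injective (hence $r_n\leq 2^n$); this is compatible with $\Gamma_n$ being a Riesz basic sequence.
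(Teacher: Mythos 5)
Your proof is correct and follows essentially the same route as the paper: split the sum into the $3^{-n}$-periodic pieces attached to each residue $q_l$, use the integer left endpoints $p_m$ of the intervals of $3^n\Omega_n$ to factor the $L^2(\nu_n)$ norm pointwise through the matrix $M$, apply the norm bounds, and identify $\int\|G\|^2$ with $\|a\|^2$ by Parseval; your rescaled change of variables to $[0,1]$ is only a cosmetic difference. The one thing you add is an explicit Gram-matrix/weak-convergence argument for the passage from $\nu_n$ to $\mu_3$ (via Propositions \ref{prgr} and \ref{pr1.14}), which the paper leaves implicit, and your handling of the transpose in the lower-bound hypothesis is the intended reading.
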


\begin{proof}
Fix $n$ and let $\{c_{\gamma}\}_{\gamma \in \Gamma_{n}}$ be a square summable sequence indexed by $\Gamma_{n}$.  We need to estimate the norm $\| \sum_{\gamma \in \Gamma_{n}} c_{\gamma} e_{\gamma} \|^2_{\nu_{n}}$.  Decompose the sum as follows:
\begin{multline*}
\sum_{\gamma \in \Gamma_{n}} c_{\gamma} e_{\gamma} (x) = \sum_{l = 1}^{r_{n}} \sum_{z \in \bz} c_{3^{n}z + q_{l}} e_{3^{n}z + q_{l}}(x) = \sum_{l = 1}^{r_{n}} \sum_{z \in \bz} d_{3^{n}z}^{l} e_{3^{n}z}(x) e_{q_{l}}(x) = \sum_{l = 1}^{r_{n}} f_{l}(x) e_{q_{l}}(x)
\end{multline*}
where $d_{3^{n}z}^{l} = c_{3^{n}z + q_{l}}$ and $f_{l}(x) = \sum_{z \in \bz} d_{3^{n}z}^{l} e_{3^{n}z}(x)$.  Note that $f_{l}(x + \frac{1}{3^{n}}) = f_{l}(x)$.  We have that
\begin{align*}
\| \sum_{\gamma \in \Gamma_{n}} c_{\gamma} e_{\gamma} \|^2_{\nu_{n}} &=  \int | \sum_{\gamma \in \Gamma_{n}} c_{\gamma} e_{\gamma}(t) |^2 d \nu_{n}(t) \\
&= \frac{3^n}{2^{n}} \sum_{m=1}^{2^{n}} \int_{0}^{\frac{1}{3^n}} | \sum_{\gamma \in \Gamma_{n}} c_{\gamma} e_{\gamma}(t + \frac{p_{m}}{3^n}) |^2 d(t) \\
&= \frac{3^n}{2^{n}} \sum_{m=1}^{2^{n}} \int_{0}^{\frac{1}{3^n}} | \sum_{l = 1}^{r_{n}} f_{l}(x + \frac{p_{m}}{3^n}) e_{q_{l}}(x + \frac{p_{m}}{3^n}) |^2 d(t) \\
&=\frac{3^n}{2^{n}} \sum_{m=1}^{2^{n}} \int_{0}^{\frac{1}{3^n}} | \sum_{l = 1}^{r_{n}} f_{l}(x) e_{q_{l}}(x + \frac{p_{m}}{3^n}) |^2 d(t) \\
&= \frac{3^n}{2^{n}} \int_{0}^{\frac{1}{3^n}} \left\| \begin{pmatrix} \sum_{l = 1}^{r_{n}} f_{l}(x) e_{q_{l}}(x + \frac{p_{1}}{3^n}) \\ \vdots \\ \sum_{l = 1}^{r_{n}} f_{l}(x) e_{q_{l}}(x + \frac{p_{2^n}}{3^n}) \end{pmatrix} \right\|^2  d(t) \\
&= \frac{3^n}{2^{n}} \int_{0}^{\frac{1}{3^n}} \left\| \begin{pmatrix} e_{q_{1}}(\frac{p_{1}}{3^{n}}) & \dots & e_{q_{r_{n}}}(\frac{p_{1}}{3^{n}}) \\
\vdots & \ddots & \vdots \\
e_{q_{1}}(\frac{p_{2^n}}{3^{n}}) & \dots & e_{q_{r_{n}}}(\frac{p_{2^n}}{3^{n}})
\end{pmatrix}
\begin{pmatrix} f_{1}(x) e_{q_{1}}(x) \\ \vdots \\ f_{r_{n}}(x) e_{q_{r_n}}(x) \end{pmatrix} \right\|^2  d(t) \\
& \leq \frac{3^n}{2^{n}} \int_{0}^{\frac{1}{3^n}} 2^n L^2 \left\| \begin{pmatrix} f_{1}(x) e_{q_{1}}(x) \\ \vdots \\ f_{r_{n}}(x) e_{q_{r_n}}(x) \end{pmatrix} \right\|^2  d(t)  \\
&= 3^n L^2 \int_{0}^{\frac{1}{3^n}} \left\| \begin{pmatrix} f_{1}(x) \\ \vdots \\ f_{r_{n}}(x) \end{pmatrix} \right\|^2  d(t) \\
&= 3^n L^2 \left( 3^{-n} \sum_{l=1}^{r_{n}} \sum_{z \in \bz} | d_{3^n z}^{l} |^2 \right) \\
&= L^2 \left( \sum_{\gamma \in \Gamma_{n}} | c_{\gamma} |^2 \right).
\end{align*}

If the matrix is bounded below, the inequality above is reversed, with $L$ replaced by $L'$.


\end{proof}

\bibliographystyle{alpha}
\bibliography{spectral}

\end{document}